\newcommand{\R}{{\mathbb R}}
\newcommand{\Z}{{\mathbb Z}}
\newcommand{\Sp}{{\mathbb S}}
\newcommand{\no}{\nonumber}
\newcommand{\be}{\begin{eqnarray}}
\newcommand{\ben}{\begin{eqnarray*}}
\newcommand{\en}{\end{eqnarray}}
\newcommand{\enn}{\end{eqnarray*}}
\newcommand{\ba}{\backslash}
\newcommand{\pa}{\partial}
\newcommand{\ov}{\overline}
\newcommand{\I}{{\rm Im}}
\newcommand{\Rt}{{\rm Re}}
\newcommand{\g}{\gamma}
\newcommand{\G}{\Gamma}
\newcommand{\Om}{\Omega}
\newcommand{\la}{\lambda}
\newcommand{\ol}{\overline}
\newcommand{\half}{\frac{1}{2}}
\newtheorem{theorem}{Theorem}[section]
\newtheorem{corollary}[theorem]{Corollary}
\newtheorem{remark}[theorem]{Remark}
\newtheorem{algorithm}{Algorithm}[section]
\begin{document}
\renewcommand{\theequation}{\arabic{section}.\arabic{equation}}

\begin{titlepage}
\title{\bf Recovering scattering obstacles by multi-frequency phaseless far-field data}
\author{Bo Zhang\thanks{LSEC and Institute of Applied Mathematics, AMSS, Chinese Academy of Sciences,
Beijing, 100190, China and School of Mathematical Sciences, University of Chinese Academy of Sciences,
Beijing 100049, China ({\tt b.zhang@amt.ac.cn})}
\and Haiwen Zhang\thanks{Institute of Applied Mathematics, AMSS, Chinese Academy of Sciences,
Beijing 100190, China ({\tt zhanghaiwen@amss.ac.cn})}
}
\date{}
\end{titlepage}
\maketitle
%\vspace{.2in}

\begin{abstract}
It is well known that the modulus of the far-field pattern (or phaseless far-field pattern) is invariant under translations
of the scattering obstacle if only one plane wave is used as the incident field, so the shape but not the location of the
obstacle can be recovered from the phaseless far-field data. This paper aims to devise an approach to break
the translation invariance property of the phaseless far-field pattern. To this end, we make use of the superposition of
two plane waves rather than one plane wave as the incident field. In this paper, it is mathematically proved that
the translation invariance property of the phaseless far-field pattern can indeed be broken if superpositions of two plane
waves are used as the incident fields for all wave numbers in a finite interval.
Furthermore, a recursive Newton-type iteration algorithm in frequencies is also developed to numerically recover both
the location and the shape of the obstacle simultaneously from multi-frequency phaseless far-field data.
Numerical examples are also carried out to illustrate the validity of the approach and the effectiveness of
the inversion algorithm.
\end{abstract}

\section{Introduction}\label{se1}

Inverse acoustic and electromagnetic scattering by bounded obstacles has a wide application in many areas
such as radar, sonar, remote sensing, geophysics, medical imaging and nondestructive testing, and therefore
has been extensively studied mathematically and numerically (see, e.g. \cite{CK13}).
This paper is devoted to the recovery of scattering obstacles from phaseless far-field data.
Our discussion is restricted to the two-dimensional case by assuming that the obstacle is an infinite cylinder
which is invariant in the $x_3$ direction.

To give a precise description of the problem, assume that the obstacle $\Om\subset\R^2$
is a bounded domain with $C^2-$smooth connected boundary $\G:=\pa\Om$.
Denote by $\nu$ the unit outward normal on $\G$ and by $\Sp^1$ the unit circle.
Consider the time-harmonic ($e^{-i\omega t}$ time dependence) plane wave
\ben
u^i=u^i(x;d,k):=\exp(ikd\cdot x)
\enn
which is incident on the obstacle $\Om$ from the unbounded part $\R^2\ba\ov{\Om}$,
where $d=(\cos\theta,\sin\theta)^T\in\Sp^1$ is the incident direction and $\theta\in[0,2\pi]$ is the incidence angle.
Here, $k=\omega/c>0$ is the wave number, $\omega$ and $c$ are the wave frequency and speed in $\R^2\ba\ov{\Om}$,
respectively. Then the total field $u=u^i+u^s$ is given as the sum of the incident field $u^i$
and the unknown scattered field $u^s$. For impenetrable obstacles, the scattered field $u^s$ in $\R^2\ba\ov{\Om}$
satisfies the problem
\begin{align}
\label{eq1}\Delta u^s+k^2 u^s=0&\quad \textrm{in}\;\;\R^2\ba\ov{\Om}\\
\label{eq2}\mathscr{B}(u^s)=f&\quad \textrm{on}\;\;\G\\
\label{eq3}\lim_{r\to\infty}r^\half\left(\frac{\pa u^s}{\pa r}-ik u^s\right)=0&\quad r=|x|.
\end{align}
Here, (\ref{eq2}) represents the Dirichlet condition with $\mathscr{B}(u^s):=u^s$ (that is, $\Om$ is a sound-soft obstacle),
the Neumann condition with $\mathscr{B}(u^s):={\pa u^s}/{\pa\nu}$ (corresponding to a sound-hard obstacle $\Om$) or
an impedance boundary condition with $\mathscr{B}(u^s):={\pa u^s}/{\pa\nu}+\mu u^s$, where $\mu$ is the surface impedance
coefficient depending on the physical property of the obstacle $\Om$ (in this paper, we assume that $\mu$ is a constant),
so the corresponding boundary data $f=-u^i|_\G$, $f=-({\pa u^i}/{\pa\nu})|_\G$ or $f=-({\pa u^i}/{\pa\nu}+\mu u^i)|_\G$,
respectively. Further, for penetrable obstacles, the scattered field $u^s$ in $\R^2\ba\ol{\Om}$ and the transmitted field $u$
in $\Om$ satisfy the problem
\begin{align}
\label{eq4}\Delta u^s+k^2 u^s=0&\quad \textrm{in}\;\;\R^2\ba\ol{\Om}\\
\label{eq5}\Delta u+k^2 nu=0&\quad \textrm{in}\;\;\Om\\
\label{eq6}u^s_+ -u_-=f_1,\quad
\frac{\pa u^s_+}{\pa\nu}-\lambda\frac{\pa u_-}{\pa\nu}=f_2 &\quad \textrm{on}\;\;\G\\
\label{eq7}\lim_{r\to\infty}r^\half\left(\frac{\pa u^s}{\pa r}-ik u^s\right)=0&\quad r=|x|
\end{align}
where (\ref{eq6}) is called the transmission boundary condition with the boundary
data $(f_1,f_2)=(-u^i|_\G,-({\pa u^i}/{\pa\nu})|_\G)$.
Here, $n$ is the index of refraction of the medium in $\Om$ which is assumed a constant with $\Rt(n)>0$ and $\I(n)\ge0$,
$\la$ is the transmission parameter which is a positive constant and depends on the property of the medium in $\R^2\ba\ol{\Om}$
and $\Om$, and the subscripts $+$ and $-$ denote the limits from the exterior and interior of the boundary, respectively.
Note that (\ref{eq3}) and (\ref{eq7}) are the well-known Sommerfeld radiation condition.

The well-posedness of the problems (\ref{eq1})-(\ref{eq3}) and (\ref{eq4})-(\ref{eq7}) have been
established by the variational method \cite{CC14} or the integral equation method \cite{CK83,CK13}.
In addition, it is known that the scattered field $u^s$ has the following asymptotic behavior
\be\label{asymp-1}
u^s(x;d,k)=\frac{e^{ik|x|}}{\sqrt{|x|}}\left(u^\infty(\hat{x};d,k)+O\Big(\frac{1}{|x|}\Big)\right),
\qquad |x|\to\infty,
\en
uniformly for all observation directions $\hat{x}=x/|x|$ on $\Sp^1$, where $u^\infty$ is
called the far-field pattern of the scattered field $u^s$.

Many inversion algorithms have been developed for the numerical reconstruction of the obstacle $\Om$
or its boundary $\G$ from the far-field pattern (see, e.g. the iteration methods \cite{K93} and the singular sources
method \cite{P01,PS05}, the linear sampling method \cite{CC14} and the factorization method \cite{KG08}).
An excellent and comprehensive survey on these results can be found in the monograph \cite{CK13}.
{%\color{blue}
In addition, some recent works on inverse scattering can be found in \cite{AKKL12,AGKLS12,AGJK12},
where a time-reversal, a linearization approach (with respect to the size of the obstacle/operating wavelength),
a topological derivative based imaging functionals are introduced and their resolution and
stability with respect to measurement and medium noises are quantified.
}

In many practical applications, the phase of the far-field pattern can not be measured accurately
compared with its modulus, and therefore it is often desirable to reconstruct the scattering obstacle
from the modulus of the far-field pattern or the phaseless far-field data.
However, not many results are available for such problems both mathematically and numerically.
Kress and Rundell first studied such inverse problems in \cite{KR97} and proved that
for the case when $\Om$ is a sound-soft obstacle the modulus $|u^\infty(\hat{x};d,k)|$ of the far-field pattern
is invariant under translations of the obstacle $\Om$, that is, for the shifted domain
$\Om_\ell:=\{x+\ell\,:\,x\in\Om\}$ with $\ell\in\R^2$, the far-field pattern $u_\ell(x;d,k)$ corresponding
to $\Om_\ell$ satisfies the relation
\be\label{TI-1}
u^\infty_{\ell}(\hat{x};d,k)=e^{ik\ell\cdot(d-\hat{x})}u^\infty(\hat{x};d,k),\;\;\hat{x}\in\Sp^1
\en
for any $\ell\in\R^2$. Therefore it is impossible to reconstruct the location of the obstacle $\Om$ from the
phaseless far-field pattern $|u^\infty(\hat{x};d,k)|$ with one plane wave as the incident field (i.e., with one incident
direction $d\in\Sp^1$). It was further proved in \cite{KR97} that this ambiguity cannot be remedied by using the phaseless
far-field pattern $|u^\infty(\hat{x};d,k)|$ with finitely many different wave numbers $k$ or different incident directions $d$.
Regularized Newton and Landweber iteration methods were also discussed in \cite{KR97} for the reconstruction of the
shape of the obstacle $\Om$ from the modulus of the far-field pattern with one plane wave as the incident field.
To reduce the computational cost, the nonlinear integral equation method was proposed
in \cite{Ivanyshyn07,IvanyshynKress2010} to reconstruct the shape of the obstacle $\Om$ from the modulus of
the far-field pattern with one plane wave as the incident field. Furthermore, in \cite{IvanyshynKress2010}
after the shape of the obstacle is reconstructed from the phaseless far-field data, an algorithm is proposed
for the localization of the obstacle by employing the relation (\ref{TI-1}) and using several full far-field
measurements at the backscattering direction.
Since the translation invariance relation (\ref{TI-1}) also holds in the case of sound-hard and impedance obstacles
(see \cite{LS04}), the algorithm proposed in \cite{IvanyshynKress2010} for recovering the location of the obstacle
works for these two cases as well.
In \cite{IvanyshynKress2011}, a nonlinear integral equation method was developed to recover its real-valued surface
impedance function from the modulus of the far-field pattern with one plane wave as the incident field
under the assumption that the scattering obstacle is known.
Recently, an inverse scattering scheme was proposed in \cite{LiLiu15} for recovering a polyhedral sound-soft or
sound-hard obstacle from a few high-frequency acoustic backscattering measurements. The method in \cite{LiLiu15}
first determines the exterior unit normal vector of each side/face of the obstacle by using the phaseless
backscattering far-field data corresponding to a few incident plane waves with suitably chosen incident directions
and then recovers a location point of the obstacle and distance of each side/face away from the location point
by making use of the far-field data with phases. In \cite{ACZ16}, a reconstruction algorithm was introduced
for reconstructing small perturbations of a known circle from the phaseless far-field data.
Recently, an iterative numerical algorithm was introduced in \cite{S16} to reconstruct the shape of a smooth and
strictly convex, sound-soft, obstacle from phaseless backscattering far-field data,
assuming sufficiently high frequency.
For plane wave incidence no uniqueness results are available for the general inverse obstacle scattering problems
with phaseless far-field data except for the case when the obstacle is a sound-soft ball centered
at the origin in \cite{LZ10}.

In contrast to the case with phaseless far-field data, the translation invariance property does not hold for the modulus
of the near-field (or the phaseless near-field), so many reconstruction algorithms have been developed to recover
the obstacle or the refractive index of the medium from the phaseless near-field data (or the phaseless total near-field
data), corresponding to plane wave incidence or point source incidence
(see, e.g. \cite{TGRS03,Li09,MDS92,MD93,Pan11,MOTL97} and the references quoted there).
Recently, for point source incidence uniqueness results have been established in \cite{K14} for the inverse scattering
problem of determining a nonnegative, smooth, real-valued potential with a compact support from the phaseless near-field data
corresponding to all incident point sources placed on a surface enclosing the compact support of the potential
for all wave numbers in a finite interval. This uniqueness result has been extended to the case of recovering
the smooth wave speed in the 3D Helmholtz equation in \cite{K17}.
Reconstruction procedures were introduced in \cite{KR16,N16,N15} for the inverse medium scattering problems
with phaseless near-field data. A direct imaging method was recently proposed in \cite{CH16,CH17} based on reverse time
migration for reconstructing extended obstacles by phaseless near-field electromagnetic and acoustic scattering data.

It should be pointed out that a continuation algorithm was proposed in \cite{BaoLiLv2012} to reconstruct the shape
of a perfectly reflecting periodic surface from the phaseless near-field data with one plane wave as the incident field
since the phaseless near-field is also invariant under translations in the non-periodic direction of the periodic surface.
Recently, a recursive linearization algorithm in frequencies was introduced in \cite{BaoZhang16} to recover the shape of
multi-scale sound-soft large rough surfaces from phaseless measurements of the scattered field generated by tapered waves
with multiple frequencies.

As discussed above, for plane wave incidence it is the translation invariance relation (\ref{TI-1}) which
makes it impossible to recover the location of the scattering obstacle from phaseless far-field data.
The purpose of this paper is to devise an approach to break the translation invariance property
of the phaseless far-field pattern. Precisely, instead of using one plane wave as the incident field,
we will make use of the following superposition of two plane waves as the incident field:
\be\label{IW}
u^i=u^i(x;d_1,d_2,k):=\exp({ikd_1\cdot x})+\exp({ikd_2\cdot x})
\en
where, for $j=1,2$, $d_j=(\cos\theta_j,\sin\theta_j)^T\in{\mathbb{S}}^1$
is the incident direction and $\theta_j\in[0,2\pi]$ is the incidence angle.
Then the scattered field $u^s$ will have the asymptotic behavior
\be\label{asymp-2}
u^s(x;d_1,d_2,k)=\frac{e^{ik|x|}}{\sqrt{|x|}}\left(u^\infty(\hat{x};d_1,d_2,k)
+O\Big(\frac{1}{|x|}\Big)\right),\qquad |x|\to\infty,
\en
uniformly for all observation directions $\hat{x}\in\Sp^1$.
We will establish conditions on the incident directions $d_j$ ($j=1,2$) under which the translation invariance
relation does not hold for $u^\infty(\hat{x};d_1,d_2,k)$ with all wave numbers $k$ in a finite interval.
Motivated by this, a recursive Newton iteration method in frequencies is then developed to recover both the location
and the shape of the obstacle $\Om$ simultaneously from multi-frequency phaseless far-field data.
Numerical examples are carried out to illustrate the validity of the approach.

The outline of this paper is as follows. In Section \ref{sec2}, properties of the far-field pattern
are studied for the scattering problem with superpositions of two plane waves as the incident fields,
and conditions are also obtained on the incident directions which ensure that
the translation invariance relation does not hold for all wave numbers in a finite interval.
In Section \ref{sec3}, a recursive Newton-type iteration method in frequencies is developed to solve
the inverse problems numerically with multi-frequency phaseless far-field data,
and numerical examples are carried out in Section \ref{sec4} to illustrate the validity of the new approach
and the effectiveness of the inversion algorithm.

\section{The direct problem with superpositions of plane waves as incident fields}\label{sec2}
\setcounter{equation}{0}

In this section we study properties of the far-field pattern $u^\infty(\hat{x};d_1,d_2,k)$
for the scattering problems (\ref{eq1})-(\ref{eq3}) and (\ref{eq4})-(\ref{eq7}) with the incident field
given by (\ref{IW}) which is a superposition of two plane waves.
In particular, we establish conditions on the incident directions $d_j$ ($j=1,2$) and the wave number $k$
which guarantee that the translation invariance relation does not hold for $u^\infty(\hat{x};d_1,d_2,k)$.
To this end, for any two different unit vectors $d_1,d_2\in\Sp^1$ (i.e., $d_1\neq d_2$) let $n_{d_1,d_2}\in\Sp^1$
denote the unit vector satisfying that $n_{d_1,d_2}\cdot d_1=n_{d_1,d_2}\cdot d_2$ (there are two such
unit vectors, so we simply choose one of them to be $n_{d_1,d_2}$).

We also need to introduce the following interior transmission problem (ITP): find a pair of functions $(u,v)$ such that
\ben\label{eq16}
\begin{cases}
\Delta u + k^2 n u = 0&\textrm{in}\;\;\Om\\
\Delta v + k^2 v = 0 &\textrm{in}\;\;\Om\\
u-v =0,\;\;{\pa u}/{\pa\nu}-\la{\pa v}/{\pa\nu}=0&\textrm{on}\;\;\G
\end{cases}
\enn
Note that $k > 0$ is called a transmission eigenvalue if the interior transmission problem (ITP)
has a nontrivial solution.
For a comprehensive discussion about the transmission eigenvalues, we refer to \cite{CH12}, where it is proved that
under certain conditions on $\la$ and the refractive index $n$, the set of real transmission eigenvalues of the
interior transmission problem (ITP) is discrete (see \cite[Theorems 3.6, 4.1 and 4.3]{CH12}).

\begin{theorem}\label{thm1}
Let $k>0$ and let $d\in\Sp^1$.
For the incident field $u^i=u^i(x;d,k)$ assume that $u^s(x;d,k)$ and $u^s_{\ell}(x;d,k)$ are the scattering
solutions to the problem $(\ref{eq1})-(\ref{eq3})$ with the same boundary condition (or the problem
$(\ref{eq4})-(\ref{eq7})$ with the same transmission constant $\la$), corresponding to the obstacle $\Om$ and
its shifted domain $\Om_\ell$, respectively. Assume further that $u^\infty(\hat{x};d,k)$ and
$u^\infty_{\ell}(\hat{x};d,k)$ are the far-field patterns of the scattered fields
$u^s(x;d,k)$ and $u^s_{\ell}(x;d,k)$, respectively. Then the translation invariance relation $(\ref{TI-1})$ holds
for any $\ell\in\R^2.$
\end{theorem}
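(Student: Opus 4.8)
The plan is to establish the translation invariance relation \eqref{TI-1} by relating the scattering problem on the shifted domain $\Om_\ell$ to that on the original domain $\Om$ via a change of variables $y = x - \ell$. First I would observe that if $u^s_\ell(x;d,k)$ solves the exterior problem on $\R^2\ba\ov{\Om_\ell}$, then the function $w(y) := u^s_\ell(y+\ell;d,k)$ is defined on $\R^2\ba\ov{\Om}$ and, since the Helmholtz operator $\Delta + k^2$ is translation-invariant, $w$ again satisfies \eqref{eq1} (or \eqref{eq4}--\eqref{eq5} in the transmission case) together with the Sommerfeld radiation condition \eqref{eq3}. The point is that the governing equations and the boundary operator $\mathscr{B}$ are unchanged under translation, so $w$ is a scattered field for a correspondingly translated incident field on the domain $\Om$.

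The key step is to track exactly what incident field $w$ scatters. Writing the incident plane wave as $u^i(x;d,k) = e^{ikd\cdot x}$, the boundary data on $\G_\ell = \pa\Om_\ell$ involves $u^i$ evaluated there; after the shift $y = x-\ell$ one picks up the factor
\begin{equation}
u^i(y+\ell;d,k) = e^{ikd\cdot(y+\ell)} = e^{ikd\cdot\ell}\,e^{ikd\cdot y} = e^{ikd\cdot\ell}\,u^i(y;d,k).
\end{equation}
Hence $w$ is (up to the constant factor $e^{ikd\cdot\ell}$) the scattered field on $\Om$ for the original incident direction $d$. By uniqueness of the forward problem — the well-posedness cited from \cite{CC14,CK83,CK13} — linearity gives $w(y) = e^{ikd\cdot\ell}\,u^s(y;d,k)$, i.e.
\begin{equation}
u^s_\ell(x;d,k) = e^{ikd\cdot\ell}\,u^s(x-\ell;d,k).
\end{equation}

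Finally I would read off the far-field pattern from this identity. Using the asymptotic form \eqref{asymp-1} and the standard large-$|x|$ expansion of $|x-\ell|$, namely $|x-\ell| = |x| - \hat{x}\cdot\ell + O(1/|x|)$, the outgoing factor transforms as $e^{ik|x-\ell|} = e^{ik|x|}e^{-ik\hat{x}\cdot\ell}(1+O(1/|x|))$, while the square-root prefactor contributes only lower-order corrections. Matching the leading coefficient then yields
\begin{equation}
u^\infty_\ell(\hat{x};d,k) = e^{ikd\cdot\ell}\,e^{-ik\hat{x}\cdot\ell}\,u^\infty(\hat{x};d,k) = e^{ik\ell\cdot(d-\hat{x})}\,u^\infty(\hat{x};d,k),
\end{equation}
which is precisely \eqref{TI-1}. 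The main obstacle I anticipate is not conceptual but bookkeeping: I must verify that the boundary operator $\mathscr{B}$ (including the normal derivative and, in the impedance and transmission cases, the coefficients $\mu$, $\la$, $n$) genuinely commutes with the translation, since the outward normal $\nu$ at a boundary point transforms correctly under the shift and the constants $\mu,\la,n$ are unchanged; once this invariance of the full boundary-value problem under translation is confirmed, the rest follows mechanically from uniqueness and the far-field asymptotics. A remark worth making is that this argument is uniform across all four boundary conditions precisely because each $\mathscr{B}$ is built from translation-invariant differential operators with constant coefficients.
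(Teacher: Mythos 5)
Your proof is correct and is exactly the standard translation/change-of-variables argument: the paper itself gives no independent proof of Theorem \ref{thm1}, simply citing \cite{KR97} and \cite{LS04} and noting the penetrable case is analogous, and your argument is precisely the one underlying those references. The only point to be careful about—which you already flag—is that the boundary operator and the constants $\mu$, $\la$, $n$ commute with the translation, which holds here since they are constant; so your write-up fills in, correctly, what the paper delegates to the literature.
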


\begin{proof}
The result is proved in \cite{KR97} for sound-soft obstacles and in \cite{LS04}
for sound-hard and impedance obstacles with constant impedance coefficients.
The case of penetrable obstacles can be proved similarly.
\end{proof}

\begin{theorem}\label{thm2}
Let $k>0$ and let $\ell\in\R^2$, $d_1,d_2\in\Sp^1$ with $d_1\neq d_2$.

Assume that $u^s(x;d_1,d_2,k)$ and $u^s_{\ell}(x;d_1,d_2,k)$ are the scattering solutions to
the problem $(\ref{eq1})-(\ref{eq3})$ with the same boundary condition (or the problem
$(\ref{eq4})-(\ref{eq7})$ with the same transmission constant $\la$), corresponding to the obstacle $\Om$
and its shifted domain $\Om_\ell$, respectively, and generated by the incident field $u^i=u^i(x;d_1,d_2,k)$.
Assume further that $u^\infty(\hat{x};d_1,d_2,k)$ and $u^\infty_{\ell}(\hat{x};d_1,d_2,k)$ are
the far-field patterns of the scattered fields $u^s(x;d_1,d_2,k)$ and $u^s_{\ell}(x;d_1,d_2,k)$, respectively.
For the case of penetrable obstacles we also assume that $k$ is not a transmission eigenvalue of the interior
transmission problem (ITP). Then
\be\label{TI-2}
\left|u^\infty_{\ell}(\hat{x};d_1,d_2,k)\right|=\left|u^\infty(\hat{x};d_1,d_2,k)\right|,\;\hat{x}\in\Sp^1
\en
for $\ell=\ell_n:=a n_{d_1,d_2}+[2\pi n/(k|d_1-d_2|^2)](d_1-d_2)$ with any $a\in\R$, where $n\in\Z$.
Further, except for $\ell_n$, there may exist at most a real constant $\tau$ with $0<\tau<2\pi$ such that
(\ref{TI-2}) holds for $\ell=\ell_{n\tau}:=a n_{d_1,d_2}+[(2\pi n+\tau)/(k|d_1-d_2|^2)](d_1-d_2)$
with any $a\in\R$ and $n\in\Z$.
\end{theorem}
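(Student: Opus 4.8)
The plan is to reduce the statement to a single scalar identity in the shift $\ell$ and then to read off the admissible shifts. First I would use the linearity of the direct problems $(\ref{eq1})$--$(\ref{eq3})$ and $(\ref{eq4})$--$(\ref{eq7})$ with respect to the incident field: since $u^i(x;d_1,d_2,k)=u^i(x;d_1,k)+u^i(x;d_2,k)$, the scattered fields and hence the far-field patterns add. Writing $A:=u^\infty(\hat x;d_1,k)$ and $B:=u^\infty(\hat x;d_2,k)$, this gives $u^\infty(\hat x;d_1,d_2,k)=A+B$, and likewise for the shifted obstacle $\Om_\ell$. Applying Theorem~\ref{thm1} to each single-wave pattern yields $u^\infty_\ell(\hat x;d_j,k)=e^{ik\ell\cdot(d_j-\hat x)}u^\infty(\hat x;d_j,k)$ for $j=1,2$. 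Factoring out the unimodular phase $e^{-ik\ell\cdot\hat x}$ and setting $\delta:=k\ell\cdot(d_1-d_2)$, a short computation then gives
\ben
\left|u^\infty_\ell(\hat x;d_1,d_2,k)\right|^2-\left|u^\infty(\hat x;d_1,d_2,k)\right|^2
&=&2\,\Rt\!\left[(e^{i\delta}-1)A\,\ov{B}\right]\\
&=&-2\sin(\delta/2)\,\I\!\left[e^{i\delta/2}A\,\ov{B}\right],
\enn
where I used $e^{i\delta}-1=2i\sin(\delta/2)e^{i\delta/2}$. Hence $(\ref{TI-2})$ holds if and only if, for every $\hat x\in\Sp^1$, either $\sin(\delta/2)=0$ or $\I[e^{i\delta/2}A\,\ov{B}]=0$.

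The first family comes from $\sin(\delta/2)=0$, i.e. $\delta\in2\pi\Z$, which kills the right-hand side for all $\hat x$ simultaneously. Since $n_{d_1,d_2}$ was chosen with $n_{d_1,d_2}\cdot(d_1-d_2)=0$, for $\ell=\ell_n=a\,n_{d_1,d_2}+[2\pi n/(k|d_1-d_2|^2)](d_1-d_2)$ one computes $\delta=k\ell_n\cdot(d_1-d_2)=2\pi n$, independently of $a$; thus $(\ref{TI-2})$ holds for every $a\in\R$ and $n\in\Z$, which is the assertion about $\ell_n$.

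It then remains to treat shifts that are not of this form. For $\ell=\ell_{n\tau}$ one finds $\delta=2\pi n+\tau$ with $\tau\in(0,2\pi)$, so $\sin(\delta/2)\neq0$, and $(\ref{TI-2})$ can hold only if $\I[e^{i\delta/2}A\,\ov{B}]=0$ for every $\hat x$, i.e. $e^{i\tau/2}A(\hat x)\,\ov{B(\hat x)}\in\R$ for all $\hat x\in\Sp^1$. Here I would invoke a rigidity argument. The patterns $A$ and $B$ are real-analytic on $\Sp^1$ and are not identically zero: for impenetrable obstacles this is automatic, because $u^\infty(\cdot;d_j,k)\equiv0$ would give $u^s(\cdot;d_j,k)\equiv0$ in $\R^2\ba\ov{\Om}$ by Rellich's lemma, forcing $u^i$ to satisfy the homogeneous boundary condition on $\G$, which is impossible; for penetrable obstacles the same holds because, $k$ not being a transmission eigenvalue, $u^\infty(\cdot;d_j,k)\equiv0$ would produce, via Rellich's lemma and the transmission conditions on $\G$, a nontrivial solution of the interior transmission problem (ITP), a contradiction. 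Consequently $A\,\ov{B}\not\equiv0$, and at any $\hat x_0$ with $A(\hat x_0)\ov{B(\hat x_0)}\neq0$ the requirement $e^{i\tau/2}A(\hat x_0)\ov{B(\hat x_0)}\in\R$ fixes $\delta/2$ modulo $\pi$, hence $\tau$ uniquely in $(0,2\pi)$. This proves that at most one such $\tau$ can occur, its existence being equivalent to $\arg(A\,\ov{B})$ being constant modulo $\pi$ on $\Sp^1$.

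The main obstacle is precisely this rigidity step. First, one must ensure the single-wave patterns do not vanish identically, which is where the transmission-eigenvalue hypothesis is genuinely used: were, say, $B\equiv0$, then $(\ref{TI-2})$ would hold for \emph{every} $\ell$ and the ``at most one $\tau$'' conclusion would collapse. Second, one must upgrade the pointwise phase condition, required to hold for all $\hat x$, into the statement that $\delta$---and therefore $\tau$---is determined uniquely modulo $2\pi$; the analyticity of $A$ and $B$ is what guarantees that the exceptional observation directions where $A\ov{B}=0$ form only a discrete set and so cannot spoil this conclusion.
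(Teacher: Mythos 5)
Your argument is correct and follows essentially the same route as the paper: decompose via linearity and Theorem \ref{thm1}, reduce (\ref{TI-2}) to the scalar condition $\Rt[(e^{i\delta}-1)A\ov{B}]=0$ with $\delta=k\ell\cdot(d_1-d_2)$, rule out $A\ov{B}\equiv 0$ on $\Sp^1$ via Rellich's lemma together with the (ITP) non-eigenvalue hypothesis, and then read off the family $\delta\in 2\pi\Z$ plus at most one extra residue $\tau$; your half-angle factorization is merely a repackaging of the paper's identity $\cos[\delta+\theta(\hat{x}_0)]-\cos[\theta(\hat{x}_0)]=0$. The only blemish is a harmless constant: $2\Rt[(e^{i\delta}-1)A\ov{B}]=-4\sin(\delta/2)\,\I[e^{i\delta/2}A\ov{B}]$ rather than $-2\sin(\delta/2)\,\I[e^{i\delta/2}A\ov{B}]$, which does not affect the zero set or any conclusion.
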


\begin{proof}
We only consider the case of penetrable obstacles. The other cases can be proved similarly.

For $j=1,2$ let $u^\infty(\hat{x};d_j,k)$ and $u^\infty_{\ell}(\hat{x};d_j,k)$ be the far-field patterns of
the scattered fields $u^s(x;d_j,k)$ and $u^s_{\ell}(x;d_j,k)$ corresponding to the incident field $u^i=u^i(x;d_j,k)$
and the obstacles $\Om$ and $\Om_\ell$, respectively.
Since $u^i(x;d_1,d_2,k)=u^i(x;d_1,k)+u^i(x;d_2,k)$, we have by the conclusion (i) that for $\ell\in\R^2$
$$
u^\infty_{\ell}(\hat{x};d_1,d_2,k)=u^\infty_{\ell}(\hat{x};d_1,k)+u^\infty_{\ell}(\hat{x};d_2,k)
=e^{ik\ell\cdot(d_1-\hat{x})}u^\infty(\hat{x};d_1,k)+e^{ik\ell\cdot(d_2-\hat{x})}u^\infty(\hat{x};d_2,k).
$$
By a direct calculation it is easy to see that (\ref{TI-2}) holds if and only if $\ell$ satisfies the equation
\be\label{eq15}
\Rt\left(u^\infty(\hat{x};d_1,k)\ov{u^\infty(\hat{x};d_2,k)}\right)
=\Rt\left(e^{ik\ell\cdot(d_1-d_2)}u^\infty(\hat{x};d_1,k)\ov{u^\infty(\hat{x};d_2,k)}\right),\;\;
\hat{x}\in\Sp^1.
\en

We claim that $u^\infty(\hat{x};d_1,k)\ov{u^\infty(\hat{x};d_2,k)}\not\equiv 0,\;\;\hat{x}\in\Sp^1.$
Suppose this is not true. Then we would have
\be\label{eq18}
u^\infty(\hat{x};d_1,k)\ov{u^\infty(\hat{x};d_2,k)}\equiv 0,\;\;\hat{x}\in\Sp^1.
\en
We distinguish between the following two cases.

{\bf Case 1.} $u^\infty(\hat{x};d_1,k)=0$ for $\hat{x}\in\Sp^1$.

By Rellich's lemma \cite[Theorem 2.14]{CK13}, we have $u^s(x;d_1,k)=0$ in $\R^2\ba\ol{\Om}$.
Let $u(x;d_1,k)=u^i(x;d_1,k)+u^s(x;d_1,k)$ be the total field. Then $(u(x;d_1,k)|_\Om,u^i(x;d_1,k)|_\Om)$
satisfy the interior transmission problem (ITP). Since $k$ is not a transmission eigenvalue of the interior
transmission problem (ITP), we have $u^i(x;d_1,k)|_\Om=0$, which leads to the fact that $u^i(x;d_1,k)\equiv0$
in $\R^2$. This is a contradiction since $u^i(x;d_1,k)\neq0$ in $\R^2$.

{\bf Case 2.} $u^\infty(\hat{x}_0;d_1,k)\neq0$ for some point $\hat{x}_0\in\Sp^1$.

Since $u^\infty(\hat{x};d,k)$ is an analytic function of $\hat{x}\in\Sp^1$ for any $d\in\Sp^1$,
we have $u^\infty(\hat{x};d_1,k)\neq0$ in a neighborhood of $\hat{x}_0$.
Then, by (\ref{eq18}) we have $u^\infty(\hat{x};d_2,k)=0$ in a neighborhood of $\hat{x}_0$,
which, together with the analyticity in $\hat{x}$ of $u^\infty(\hat{x};d_2,k)$, implies that
$u^\infty(\hat{x};d_2,k)=0$ for all $\hat{x}\in\Sp^1$.
Arguing similarly as in Case 1, we have $u^i(x;d_2,k)\equiv0$ in $\R^2$, contradicting to
the fact that $u^i(x;d_2,k)\neq0$ in $\R^2$. Thus, the claim has been proved.

Now, by (\ref{eq15}) it is easy to see that (\ref{eq15}) or equivalently (\ref{TI-2}) always holds
if $\ell$ satisfies the condition
\be\label{TI-C1}
k\ell\cdot(d_1-d_2)=2\pi n,\qquad n\in\Z.
\en
Noting that $n_{d_1,d_2}\cdot d_1=n_{d_1,d_2}\cdot d_2$, we find that
the condition (\ref{TI-C1}) is satisfied if and only if
$\ell=a n_{d_1,d_2}+[2\pi n/(k|d_1-d_2|^2)](d_1-d_2)$ with any $n\in\Z$ and $a\in\R^2$.
In addition, assume that $v(\hat{x}_0):=u^\infty(\hat{x}_0;d_1,k)\ov{u^\infty(\hat{x}_0;d_2,k)}\not=0$
for some $\hat{x}_0\in\Sp^1$ and write $v(\hat{x}_0)=|v(\hat{x}_0)|\exp(i\theta(\hat{x}_0))$
with $0<\theta(\hat{x}_0)<2\pi$ and $\theta(\hat{x}_0)\not=\pi$.
Then (\ref{eq15}) is equivalent to the equation
\be\label{TI-C1a}
\cos[k\ell\cdot(d_1-d_2)+\theta(\hat{x}_0)]-\cos[\theta(\hat{x}_0)]=0.
\en
By (\ref{TI-C1a}) we know that, except for the above $\ell$ satisfying the condition (\ref{TI-C1}),
(\ref{eq15}) or equivalently (\ref{TI-2}) also holds for $\ell$ satisfying the condition
\be\label{TI-C1+}
k\ell\cdot(d_1-d_2)=2\pi n-2\theta(\hat{x}_0),\qquad n\in\Z,
\en
or equivalently for
\be\label{TI-C1b}
\ell=a n_{d_1,d_2}+\frac{2\pi n-2\theta(\hat{x}_0)}{k|d_1-d_2|^2}(d_1-d_2),\;\; n\in\Z,\;\; a\in\R^2.
\en
Thus, except for $\ell=\ell_n$, there may exist at most a real constant $\tau$ with $0<\tau<2\pi$ such that
(\ref{eq15}) or equivalently (\ref{TI-2}) holds for $\ell=a n_{d_1,d_2}+[(2\pi n+\tau)/(k|d_1-d_2|^2)](d_1-d_2)$
with any $a\in\R$ and $n\in\Z$. In fact, by (\ref{TI-C1b}) we have
\ben
\tau=\begin{cases}
2\pi-2\theta(\hat{x}_0)&\mbox{for}\;\;0<\theta(\hat{x}_0)<\pi,\\
4\pi-2\theta(\hat{x}_0)&\mbox{for}\;\;\pi<\theta(\hat{x}_0)<2\pi.
\end{cases}
\enn
The proof is thus complete.
\end{proof}

\begin{remark}\label{r0} {\rm
From the proof of Theorem \ref{thm2} we know that, if there exists a real constant $\tau$ with $0<\tau<2\pi$
such that (\ref{TI-2}) holds for $\ell=\ell_{n\tau}$ with $n\in\Z$ then the phase $\theta(\hat{x})$ of $u^\infty(\hat{x};d_1,k)\ov{u^\infty(\hat{x};d_2,k)}$ is equal to $\theta(\hat{x}_0)$ for all $\hat{x}\in\Sp^1$,
where $\theta(\hat{x}_0)$ is defined as in the proof of Theorem \ref{thm2} and related to $\tau$.
For the case when the obstacle is a circle, the far-field patterns $u^\infty(\hat{x};d_j,k)$, $j=1,2$, are given
explicitly, which can be used to prove that there is no $\tau\in(0,2\pi)$ such that $\theta(\hat{x})$
is a constant for all $\hat{x}\in\Sp^1$. This means that there is no $\tau\in(0,2\pi)$ such that (\ref{TI-2}) holds
for $\ell=\ell_{n\tau}$, that is, (\ref{TI-2}) only holds for $\ell=\ell_n$ with $n\in\Z$.
However, for general obstacles we are not able to prove this yet though we think it is true.
}
\end{remark}

\begin{remark}\label{r1} {\rm
By Theorem \ref{thm1} the translation invariance relation (\ref{TI-1}) holds for any $\ell\in\R^2$
if only one plane wave is used as the incident field, so the reconstructed obstacle can be translated in any direction,
depending on the initial guess of the iterative reconstruction algorithm, as shown in Figure \ref{fig1} below.
Theorem \ref{thm2} indicates that, by using a superposition of two plane waves with different
directions $d_1,d_2\in\Sp^1$ to be the incident field, the translation invariance relation (\ref{TI-1})
is restricted to hold only for $\ell=a n_{d_1,d_2}$ with any $a\in\R$. Thus, the reconstructed obstacle can only be
translated along a countably infinite number of straight lines which are parallel to the straight line $a n_{d_1,d_2}$,
as seen in Figures \ref{fig2} and \ref{fig3} below. Note that the two incident directions $d_1,d_2$ are symmetric
with respect to the straight line $a n_{d_1,d_2}$.
}
\end{remark}

As a corollary of Theorem \ref{thm2} we have the following result.

\begin{corollary}\label{cor1}
Let $k>0$, $\ell\in\R^2$ and let $d_{1l},d_{2l}\in\Sp^1$ with $d_{1l}\neq d_{2l}$, $l=1,2,$ and
$n_{d_{11},d_{21}}\nparallel n_{d_{12},d_{22}}$.

Assume that $u^s(x;d_{1l},d_{2l},k)$ and $u^s_{\ell}(x;d_{1l},d_{2l},k)$ are the scattering solutions to
the problem $(\ref{eq1})-(\ref{eq3})$ with the same boundary condition (or the problem
$(\ref{eq4})-(\ref{eq7})$ with the same transmission constant $\la$), corresponding to the obstacle $\Om$
and its shifted domain $\Om_\ell$, respectively, and generated by the incident field
$u^i=u^i(x;d_{1l},d_{2l},k)$, $l=1,2$. Assume further that $u^\infty(\hat{x};d_{1l},d_{2l},k)$ and
$u^\infty_{\ell}(\hat{x};d_{1l},d_{2l},k)$ are the far-field patterns of the scattered fields
$u^s(x;d_{1l},d_{2l},k)$ and $u^s_{\ell}(x;d_{1l},d_{2l},k)$, respectively.
For the case of penetrable obstacles we also assume that $k$ is not a transmission eigenvalue of the interior
transmission problem (ITP). Then we have
$$
|u^\infty(\hat{x};d_{1l},d_{2l},k)|=|u^\infty_{\ell}(\hat{x};d_{1l},d_{2l},k)|,\;\;\hat{x}\in\Sp^1,\;\;
l=1,2,
$$
only for $\ell$ being at the countably infinite number of cross points between the straight lines
$\ell_n=a n_{d_{11},d_{21}}+[2\pi n/(k|d_{11}-d_{21}|^2)](d_{11}-d_{21})$ with all $a\in\R^2$,
$\ell_m=b n_{d_{12},d_{22}}+[2\pi m/(k|d_{12}-d_{22}|^2)](d_{12}-d_{22})$ with all $b\in\R^2$,
$\ell_{n\tau_1}=a n_{d_{11},d_{21}}+[(2\pi n+\tau_1)/(k|d_{11}-d_{21}|^2)](d_{11}-d_{21})$ with all $a\in\R^2$
and $\ell_{m\tau_2}=a n_{d_{11},d_{21}}+[(2\pi m+\tau_2)/(k|d_{11}-d_{21}|^2)](d_{11}-d_{21})$ with all $a\in\R^2$,
where $n,m\in\Z$ and $\tau_j\in(0,2\pi)$ with $j=1,2$.
\end{corollary}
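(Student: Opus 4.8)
The plan is to deduce the corollary directly from Theorem \ref{thm2} by applying it separately to each of the two superposition incident fields and then intersecting the resulting admissible sets of shift vectors. First I would apply Theorem \ref{thm2} with the incident directions $(d_{11},d_{21})$: this tells us that the identity $|u^\infty_\ell(\hat{x};d_{11},d_{21},k)|=|u^\infty(\hat{x};d_{11},d_{21},k)|$ for all $\hat{x}\in\Sp^1$ forces $\ell$ to lie on the countable family of straight lines $\ell_n$ parallel to $n_{d_{11},d_{21}}$, together with the possibly-present additional family $\ell_{n\tau_1}$. Call the union of these lines $L_1$; by construction every line in $L_1$ is parallel to $n_{d_{11},d_{21}}$.

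Repeating the same argument with the directions $(d_{12},d_{22})$ yields an analogous set $L_2$, a countable union of straight lines all parallel to $n_{d_{12},d_{22}}$ (the $\ell_m$ family, plus a possible $\ell_{m\tau_2}$ family). Since the corollary demands that the modulus of the far-field pattern be preserved under the shift $\ell$ for \emph{both} incident fields simultaneously, the admissible $\ell$ must satisfy $\ell\in L_1\cap L_2$.

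It then remains to analyze this intersection. The hypothesis $n_{d_{11},d_{21}}\nparallel n_{d_{12},d_{22}}$ guarantees that every line of $L_1$ is transverse to every line of $L_2$, so each such pair of lines meets in exactly one point. Because $L_1$ and $L_2$ are each a finite number (at most two) of countably-indexed families of parallel lines, the set of these crossing points is itself countably infinite, and coincides exactly with the list of cross points between the $\ell_n$, $\ell_m$, $\ell_{n\tau_1}$ and $\ell_{m\tau_2}$ families stated in the corollary.

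The main obstacle, or rather the only point requiring genuine care, is the bookkeeping around the conditional families $\ell_{n\tau_1}$ and $\ell_{m\tau_2}$, whose existence is asserted only \textbf{at most} in Theorem \ref{thm2}. I would emphasize that, whether or not these extra families are actually present, the qualitative conclusion is unchanged: $L_1\cap L_2$ is always a discrete, countably infinite set of isolated cross points, since transversality (coming from the non-parallel hypothesis) rules out any one-dimensional overlap of the two sets, while the discreteness of the integer indices $n,m$ (and the fixed values of the phases $\tau_1,\tau_2$) rules out any accumulation of crossing points.
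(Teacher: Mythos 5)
Your proof is correct and follows exactly the route the paper intends: the paper states Corollary \ref{cor1} as an immediate consequence of Theorem \ref{thm2} without writing out a proof, and your argument (apply Theorem \ref{thm2} to each direction pair, intersect the two countable unions of parallel lines, and use $n_{d_{11},d_{21}}\nparallel n_{d_{12},d_{22}}$ to conclude the intersection is a countably infinite set of isolated cross points) is precisely the intended deduction.
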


\begin{remark}\label{r2} {\rm
Corollary \ref{cor1} indicates that, by using superpositions of two plane waves with at least
two different sets of directions as the incident fields in conjunction with all wave numbers in a finite interval,
the translation invariance property of the modulus
of the far-field pattern can be broken down. Thus it is expected that both the location and the shape of the
obstacle $\Om$ can be numerically determined from phaseless far-field data corresponding to such incident waves,
as shown in Section \ref{sec4}.
However, we can not prove this theoretically so far. In \cite{KR97}, it has already been pointed out that
it is a very difficult problem to obtain an analogue for Schiffer's uniqueness result.
The proof of this result relies heavily on the fact that, by Rellich's lemma, the far-field pattern $u^\infty$
uniquely determines the scattered wave $u^s$. But a corresponding result is not available for the modulus
of the far-field pattern.
}
\end{remark}

Motivated by Corollary \ref{cor1} and Remark \ref{r2} we consider the following inverse problem.

{\bf Inverse Problem (IP).} Given the physical property of the obstacle $\Om$
(and the refractive index $n$ if $\Om$ is a penetrable obstacle) and
the incident fields $u^i=u^i(x;d_{1l},d_{2l},k_m)$, $l=1,2,\ldots,n_d$ with $n_d\geq2$, $m=1,2,\cdots,N$,
where $k_1<k_2<\cdots<k_N$ and $d_{1l},d_{2l}\in\Sp^1$ with $d_{1l}\neq d_{2l}$, $l=1,2,\ldots,n_d$,
and satisfy that there are at least two sets of unit vectors $\{d_{1i},d_{2i}\}$ and $\{d_{1j},d_{2j}\}$ with
$i\neq j$ and $n_{d_{1i},d_{2i}}\nparallel n_{d_{1j},d_{2j}}$,
to reconstruct both the location and the shape of the obstacle $\Om$ (and the constant $\la$ if
$\Om$ is a penetrable obstacle) from the corresponding phaseless far-field data
$|u^\infty(\hat{x};d_{1l},d_{2l},k_m)|,\;\;\hat{x}\in\Sp^1,\;\;l=1,2,\ldots,n_d$, $m=1,2,\cdots,N$.

Based on Corollary \ref{cor1} and Remark \ref{r2}, in the next section we will develop a
recursive Newton-type iterative algorithm in frequencies to numerically reconstruct both the location and
the shape of the obstacle $\Om$ from the phaseless far-field data, corresponding to the incident waves given
in the inverse problem (IP), and then carry out numerical experiments to illustrate the applicability
of the inversion algorithm in Section \ref{sec4}.

We first need to introduce the following notations.
For the case of an impenetrable obstacle $\Om$ we introduce the far-field operator $F_{I,d_1,d_2,k}$
which maps the boundary $\G$ to the corresponding phaseless far-field data:
\be\label{eq9}
(F_{I,d_1,d_2,k}[\G])(\hat{x})=|u^\infty(\hat{x};d_1,d_2,k)|^2,
\en
where the subscript $I=D,N$ stands for the Dirichlet, Neumann boundary condition on $\G$, respectively,
and $d_1,d_2$ denote the incident directions of the incident wave $u^i=u^i(x;d_1,d_2,k).$
Similarly, for the case of a penetrable obstacle $\Om$, we introduce the far-field operator $F_{T,d_1,d_2,k}$
which maps the boundary $\G$ and the constant $\la$ to the corresponding phaseless far-field data:
\be\label{eq10}
(F_{T,d_1,d_2,k}[\G,\la])(\hat{x})=|u^\infty(\hat{x};d_1,d_2,k)|^2.
\en

The Newton-type method consists in solving the non-linear and ill-posed equation (\ref{eq9}) for the unknown $\G$
(or (\ref{eq10}) for the unknowns $\G$ and $\la$). To this end, we need to investigate the Frechet differentiability
of the far-field operators.
Let $h\in C^1(\G)$ be a small perturbation of $\G$ and define $\G_{h}:=\{y\in\R^2|y=x+{h}(x),x\in\G\}$.
Then $F_{I,d_1,d_2,k}$ ($I=D,N$) is called Frechet differentiable at $\G$ if there exists a linear bounded
operator $F'_{I,d_1,d_2,k}:C^1(\G)\rightarrow L^2(\Sp^1)$ such that for $h\in C^1(\G)$ we have
\ben
\|F_{I,d_1,d_2,k}[\G_{h}]-F_{I,d_1,d_2,k}[\G]-F'_{I,d_1,d_2,k}[\G;{h}]\|_{L^2(\Sp^1)}
=o(\|h\|_{C^1(\G)})
\enn
Similarly, for $\la>0$, $F_{T,d_1,d_2,k}$ is called Frechet differentiable at $(\G,\la)$ if there exists a
linear bounded operator $F'_{T,d_1,d_2,k}:C^1(\G)\times\R\rightarrow L^2(\Sp^1)$ such that
for $(h,\triangle\la)\in C^1(\G)\times\R$ we have
\ben
\|F_{T,d_1,d_2,k}[\G_h,\la+\triangle\la]-F_{T,d_1,d_2,k}[\G,\la]
-F'_{T,d_1,d_2,k}[\G,\la;h,\triangle\la]\|_{L^2(\Sp^1)}
=o(\|h\|_{C^1(\G)}+|\triangle\la|)
\enn

The following theorem characterizes the Frechet derivatives of the far-field operators.

\begin{theorem}
Assume that $\G$ is a $C^2-$smooth boundary and  the incident field is given by
$u^i=u^i(x;d_1,d_2,k)$ with $d_1, d_2\in\Sp^1$ and $k>0$.
\begin{enumerate}[(i)]
\item Let $u=u^i+u^s$, where $u^s\in H^1_{loc}(\R^2\ba\ov{\Om})$ solves the scattering problem (\ref{eq1})-(\ref{eq3})
with the Dirichlet boundary condition on $\G$ and the boundary data $f=-u^i|_\G$.
Then $F_{D,d_1,d_2,k}$ is Frechet differentiable at $\G$ with the Frechet derivative given by
$F'_{D,d_1,d_2,k}[\G;h]=2 Re[\ov{u^\infty}(u')^\infty],\;h\in C^1(\G)$,
where $(u')^\infty$ is the far-field pattern of $u'\in H^1_{loc}(\R^2\ba\ov{\Om})$
which solves the scattering problem (\ref{eq1})-(\ref{eq3}) with the Dirichlet boundary condition on $\G$
and the boundary data $f=-h_\nu({\pa u}/{\pa\nu})|_\G$.

\item Let $u=u^i+u^s$, where $u^s\in H^1_{loc}(\R^2\ba\ov{\Om})$ solves the scattering problem (\ref{eq1})-(\ref{eq3})
with the Neumann boundary condition on $\G$ and the boundary data $f=-({\pa u^i}/{\pa\nu})|_\G$.
Then $F_{N,d_1,d_2,k}$ is Frechet differentiable at $\G$ with the Frechet derivative given by
$F'_{N,d_1,d_2,k}[\G;h]=2\Rt[\ov{u^\infty}(u')^\infty],\;h\in C^1(\G)$,
where $(u')^\infty$ is the far-field pattern of $u'\in H^1_{loc}(\R^2\ba\ov{\Om})$
which solves the scattering problem (\ref{eq1})-(\ref{eq3}) with the Neumann boundary condition on $\G$ and
the boundary data $f=k^2 h_\nu u|_\G+{\rm Div}_\G[h_\nu(\nabla u)_t]$.

\item  Let $u=u^i+u^s$, where $(u^s|_{\R^2\ba\ov{\Om}},u|_\Om)\in H^1_{loc}(\R^2\ba\ov{\Om})\times H^1(\Om)$ solves
the transmission scattering problem (\ref{eq4})-(\ref{eq7}) with the boundary data
$(f_1,f_2)=(-u^i|_\G,-({\pa u^i}{\pa\nu})|_\G)$.
Then $F_{T,d_1,d_2,k}$ is Frechet differentiable at $(\G,\la)$ with the Frechet derivative given
by $F'_{T,d_1,d_2,k}[\G,\la;h,\triangle\la]=2\Rt[\ov{u^\infty}(u')^\infty],\;h\in C^1(\G),\;\triangle\la\in\R$,
where $(u')^\infty$ is the far-field pattern of $u'\in H^1_{loc}(\R^2\ba\ov{\Om})\cap H^1(\Om)$
which solves the transmission scattering problem (\ref{eq4})-(\ref{eq7}) with the boundary data
$f_1=-h_\nu({\pa u_+}/{\pa\nu}-{\pa u_-}/{\pa\nu})|_\G$ and
$f_2=(k^2-\la k^2 n)h_\nu u|_\G+{\rm Div}_\G[h_\nu((\nabla u_+)_t-\la(\nabla u_-)_t)]
+({\triangle\la}/{\la})({\pa u_+}/{\pa\nu})|_\G$ on $\G$.
\end{enumerate}
where $h_\nu$ and $h_t$ denote the normal and tangential components of a vector field $h$.
\end{theorem}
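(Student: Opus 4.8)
The plan is to factor the far-field operator through the (phased) far-field pattern and reduce the whole statement to the classical domain (shape) derivative of $u^\infty$. In each of the three cases one has $F[\,\cdot\,](\hat x)=|u^\infty(\hat x;d_1,d_2,k)|^2=u^\infty\,\ov{u^\infty}$, so the map $\G\mapsto F[\G]$ is the composition of $\G\mapsto u^\infty\in L^2(\Sp^1;\C)$ with the smooth real-Fréchet map $z\mapsto|z|^2$ on $\C$. Granting that $\G\mapsto u^\infty$ is Fréchet differentiable with derivative $(u')^\infty$ — the far-field pattern of the solution $u'$ of the auxiliary problem stated in the theorem — the chain rule immediately yields
\be
F'[\G;h]&=&(u')^\infty\,\ov{u^\infty}+u^\infty\,\ov{(u')^\infty}
=2\Rt\big[\ov{u^\infty}\,(u')^\infty\big],
\en
and likewise for the transmission operator with the additional variable $\triangle\la$. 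Thus the entire content of the theorem lies in the Fréchet differentiability of the phased far-field pattern together with the identification of the boundary data for $u'$.

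To establish the domain derivative I would fix a $C^1$ extension of $h$ to a neighborhood of $\G$ and use the diffeomorphism $\Phi_h(x)=x+h(x)$, which maps $\G$ onto $\G_h$ and is close to the identity for small $\|h\|_{C^1(\G)}$. Pulling the exterior scattering problem on $\R^2\ba\ov{\Om}_h$ back to the fixed domain $\R^2\ba\ov{\Om}$ turns the Helmholtz operator and the boundary condition into $h$-dependent operators whose coefficients depend smoothly (and affinely at first order) on $h$; differentiating the resulting variational formulation — or, equivalently, the associated boundary integral equations — at $h=0$ produces a field $u'$ which again solves the Helmholtz equation in $\R^2\ba\ov{\Om}$ and satisfies the Sommerfeld radiation condition, hence has a far-field pattern $(u')^\infty$ depending linearly and boundedly on $h$. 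A crucial simplification is that replacing a single plane wave by the superposition $u^i(x;d_1,d_2,k)$ does not alter this argument at all: the incident field enters only as fixed data, and linearity lets one carry $u^i=u^i(x;d_1,d_2,k)$ through verbatim. The well-posedness needed to make $u'$ well defined — including, in the transmission case, the exclusion of transmission eigenvalues — is precisely what the hypotheses supply.

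The hard part will be pinning down the boundary conditions for $u'$, since this is where the geometry of the perturbation enters. In the Dirichlet case, differentiating $u\circ\Phi_h=0$ on $\G$ and using $u=0$ there gives $u'=-h_\nu(\pa u/\pa\nu)$ on $\G$, which is routine. For the Neumann and transmission cases one must differentiate a condition involving the perturbed normal, so the shape derivative of $\nu$ and the splitting of $\na u$ into its normal and tangential parts $(\na u)_t$ are unavoidable; integrating the resulting tangential terms by parts on $\G$ produces the surface-divergence contribution ${\rm Div}_\G[h_\nu(\na u)_t]$, while the PDE is used to trade $\Delta u$ for $-k^2u$ (resp. $-k^2nu$ inside $\Om$), which is where the coefficients $k^2$ and $k^2-\la k^2 n$ arise. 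In the transmission case the extra scalar variable $\triangle\la$ is handled by differentiating the second jump condition with respect to $\la$, giving the term $(\triangle\la/\la)(\pa u_+/\pa\nu)$ in $f_2$. Once these boundary data are identified and the $o(\|h\|_{C^1(\G)})$ (resp. $o(\|h\|_{C^1(\G)}+|\triangle\la|)$) remainder is verified from the standard continuity and mapping properties of the solution operators, the chain rule of the first paragraph completes the proof.
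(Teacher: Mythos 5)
Your proposal is correct and follows essentially the same route as the paper: the paper also reduces the statement to the product rule applied to $|u^\infty|^2=u^\infty\,\ov{u^\infty}$ combined with the known Fr\'echet (domain) derivative of the phased far-field pattern, which it simply cites from Kress--Rundell for (i), Hettlich's Theorem 2.1 for (ii), and the argument of Theorem A.1 in Zhang--Zhang for (iii), rather than re-deriving the boundary data for $u'$ as you sketch. The only detail worth adding is the paper's remark that $\G\in C^2$ gives $u\in H^2_{loc}$ by elliptic regularity, which is what makes the boundary data for $u'$ (and hence $u'\in H^1_{loc}$) well defined.
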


\begin{proof}
(i) has been proved in \cite{KR97}.
Note that, since $\G\in C^2$, it is easily seen by the elliptic regularity estimates \cite{GT83} that
$u\in H^2_{loc}(\R^2\ba\ov{\Om})$ which guarantees that $u'\in H^1_{loc}(\R^2\ba\ov{\Om})$.

(ii) can be shown by using the product rule and Theorem 2.1 in \cite{H95}.

(iii) can also be proved similarly by using the product rule and following the idea of the proof of Theorem A.1
in \cite{ZZ13}.
\end{proof}

\section{The reconstruction algorithm}\label{sec3}
\setcounter{equation}{0}

In this section, we develop a Newton-type iteration algorithm for the inverse problem (IP)
for the case when $\Om$ is a penetrable obstacle. For other cases, the steps are similar.

Assume that the boundary $\G$ is a starlike curve which can be parameterized by $\g$ such that
\ben
\g(\theta)=(a_1,a_2)+r(\theta)(\cos\theta,\sin\theta)^T,\quad \theta\in[0,2\pi]
\enn
with its center at $(a_1,a_2)$. For the fixed wave number $k>0$, given the phaseless far-field data
$|u^\infty(\hat{x};d_{1l},d_{2l},k)|,\;d_{1l},d_{2l}\in\Sp^1,\;l=1,2,\ldots,n_d$
for the scattering problem (\ref{eq4})-(\ref{eq7}), where $n_d\geq2$ and the incident directions
$d_{1l},d_{2l}$ satisfy the conditions given in the inverse problem (IP),
(\ref{eq10}) can be rewritten as:
\be\label{eq11}
(F_{T,d_{1l},d_{2l},k}[\g,\la])(\hat{x})=|u^\infty(\hat{x};d_{1l},d_{2l},k)|^2,\quad l=1,2,\ldots,n_d.
\en
Assume that $\g^{app}=(a^{app}_1,a^{app}_2)+r^{app}(\theta)(\cos\theta,\sin\theta)^T$ and $\la^{app}$
are the approximations to $\g$ and $\la$, respectively.
Then the equation (\ref{eq11}) can be linearized at $(\g^{app},\la^{app})$ as follows:
\be\label{eq12}
(F_{T,d_{1l},d_{2l},k}[\g^{app},\la^{app}])(\hat{x})+(F'_{T,d_{1l},d_{2l},k}[\g^{app},\la^{app};
\triangle\g,\triangle\la])(\hat{x})\approx|u^\infty(\hat{x};d_{1l},d_{2l},k)|^2
\en
for $l=1,2,\ldots,n_d$, where $(\triangle\g,\triangle\la)$ are the updates to be determined.
Our Newton method consists in iterating the ill-posed equations (\ref{eq12}) with using
the Levenberg-Marquardt algorithm (see, e.g., \cite{H97,H99}).

In the numerical computation, we consider the noisy phaseless far-field data $|u^\infty_\delta|$
which satisfies that
\ben
\big\||u^\infty_\delta|^2-|u^\infty|^2\big\|_{L^2(\Sp^1)}\leq\delta\big\||u^\infty|^2\big\|_{L^2(\Sp^1)}
\enn
where $\delta>0$ is called the noise ratio. Further, $r^{app}$ has to be taken from a finite-dimensional
subspace $R_M\subset H^s(0,2\pi)$, $s\geq0$, where
\be\label{eq12+}
R_M:=\{r\in H^s(0,2\pi)\;|\;r(\theta)=\alpha_0+\sum^M_{l=1}[\alpha_l\cos(l\theta)+\alpha_{l+M}\sin(l\theta)],\;
\alpha_l\in\R\;\textrm{for}\;l =0,\ldots,2M\}
\en
with the norm
$$
\|r\|^2_{H^s(0,2\pi)}:=2\pi\alpha^2_0+\pi\sum^M_{l=1}[(1+l^2)^s(\alpha_l^2+\alpha^2_{l+M})]
$$
(see \cite[pp. 120]{H99}). Then, by using the strategy in \cite{H97,H99},
we seek the updates $(\triangle\g,\triangle\la)$, where
$\triangle\g=(\triangle a_1,\triangle a_2)+\triangle r(\theta)(\cos\theta,\sin\theta)$,
$\triangle a_1,\;\triangle a_2,\;\triangle\la\in\R,\;\triangle r\in R_M$
such that $(\triangle a_1,\;\triangle a_2,\;\triangle r,\;\triangle\la)$ solves the minimization problem:
\be\label{eq13}
&&\min_{\triangle a_1,\triangle a_2,\triangle r,\triangle\la}
\left\{\sum^{n_d}_{l=1}\big\|(F_{T,d_{1l},d_{2l},k}[\gamma^{app},\la^{app}])(\hat{x})
+(F'_{T,d_{1l},d_{2l},k}[\gamma^{app},\la^{app};\triangle\gamma,\triangle\la])(\hat{x})\right.\no\\
&&\qquad\qquad\qquad\left.-|u^\infty_\delta(\hat{x};d_{1l},d_{2l},k)|^2\big\|^2_{L^2({\mathbb{S}}^1)}
+\beta\left(\sum^2_{l=1}|\triangle a_l|^2+\|\triangle r\|^2_{H^s(0,2\pi)}+|\triangle\la|^2\right)\right\}
\en
where $\beta\in\R^+$ is chosen so that
\be\label{eq14}
&&\left(\sum^{n_d}_{l=1}\big\|(F_{T,d_{1l},d_{2l},k}[\gamma^{app},\la^{app}])(\hat{x})
+(F'_{T,d_{1l},d_{2l},k}[\gamma^{app},\la^{app};\triangle\gamma,\triangle\la])(\hat{x})
-|u^\infty_\delta(\hat{x};d_{1l},d_{2l},k)|^2\big\|^2_{L^2({\mathbb{S}}^1)}\right)^{1/2}\no\\
&&\qquad\qquad\qquad=\rho\left(\sum^{n_d}_{l=1}\big\|(F_{T,d_{1l},d_{2l},k}[\gamma^{app},\la^{app}])(\hat{x})
-|u^\infty_\delta(\hat{x};d_{1l},d_{2l},k)|^2\big\|^2_{L^2({\mathbb{S}}^1)}\right)^{1/2}
\en
with a given constant $\rho<1$. Here, we use the bisection algorithm to determine $\beta$ (see \cite{H99}).
Then the approximations $\g^{app}$ and $\la^{app}$ are updated by $\g^{app}+\triangle\g$ and
$\la^{app}+\triangle\la$, respectively.

Define the relative error by
\ben
\textrm{Err}_k:=\frac{1}{n_d}\sum^{n_d}_{l=1}\frac{\big\|(F_{T,d_{1l},d_{2l},k}[\g^{app},\la^{app}])(\hat{x})
-|u^\infty_\delta(\hat{x};d_{1l},d_{2l},k)|^2\big\|_{L^2({\mathbb{S}}^1)}}
{\big\||u^\infty_\delta(\hat{x};d_{1l},d_{2l},k)|^2\big\|_{L^2({\mathbb{S}}^1)}}
\enn
Then the iteration is stopped if ${\rm Err}_k<\tau\delta$, where $\tau>1$ is a given constant.

\begin{remark}\label{r3} {\rm
In the numerical examples carried out later, we use the Nystr\"{o}m method (see, e.g. \cite{CK13,HS98,K95})
to compute the synthetic data and the numerical solution in each iteration.
We use $|u^\infty_\delta(\hat{x_j};d_{1l},d_{2l},k)|$, $d_{1l},d_{2l}\in\Sp^1$,
$j=1, 2,\ldots,n_f$, $l=1,2,\ldots,n_d$ as the measured phaseless far-field data,
where $\hat{x}_j =(\cos(\theta_j), \sin(\theta_j))$, $\theta_j = 2\pi(j -1)/n_f$
and the norm $\|\cdot\|_{L^2(\Sp^1)}$ is approximated by
\ben
\|g\|^2_{L^2(\Sp^1)}\approx\frac{2\pi}{n_f}\sum^{n_f}_{j=1}|g(\hat{x}_j)|^2,\;\;\;g\in C(\Sp^1)
\enn
For the computation of the Frechet derivative of the far-field operator, we refer to \cite{H98,HS98,K93}.
}
\end{remark}

{%\color{red}
Motivated by Corollary \ref{cor1} and Remark \ref{r2} as well as \cite{BLLT15}, our reconstruction algorithm
makes use of multi-frequency phaseless far-field measurement data $|u^\infty_{\delta}(\hat{x_j};d_{1l},d_{2l},k_m)|^2$,
$j=1,2,\ldots,n_f$, $l=1,2,\ldots,n_d$, $m=1,2,\ldots,N$ with $d_{1l},d_{2l}\in\Sp^1$, $d_{1l}\neq d_{2l}$
and $k_1<k_2<\cdots<k_N$. We first choose $(\g^{app},\la^{app})$ as the initial guess of $(\g,\la)$ with $\la^{app}>0$
and solve (\ref{eq13}) for the update $(\triangle\g,\triangle\la)$ with the strategy (\ref{eq14}) with $k=k_1$
to get the reconstruction $(\g^{app}_1,\la^{app}_1):=(\g^{app}+\triangle\g,\la^{app}+\triangle\la)$ of $(\g,\la)$
at the wavenumber $k_1$. We then use the reconstruction $(\g^{app}_1,\la^{app}_1)$ for $k_1$ as the initial guess
of $(\g,\la)$ for the next wavenumber $k_2$ and solve (\ref{eq13}) for the update $(\triangle\g,\triangle\la)$ with
the strategy (\ref{eq14}) with $k=k_2$, $(\g^{app},\la^{app}):=(\g^{app}_1,\la^{app}_1)$ to get the reconstruction
$(\g^{app}_2,\la^{app}_2):=(\g^{app}+\triangle\g,\la^{app}+\triangle\la)$ of $(\g,\la)$ at the next wavenumber $k_2$.
Repeat this process until the reconstruction of $(\g,\la)$ is obtained at the highest wavenumber $k_N$.
Our recursive Newton iteration algorithm in frequencies for the inverse problem (IP) is based on the above process
and presented in Algorithm \ref{alg1} below in the case of a penetrable obstacle.
}

\begin{algorithm}\label{alg1}
Given the refractive index $n$ and the phaseless far-field data $|u^\infty_{\delta}(\hat{x}_j;d_{1l},d_{2l},k_m)|^2$,
$d_{1l},d_{2l}\in\Sp^1$, $j=1,2,\ldots,n_f$, $l=1,2,\ldots,n_d$, $m=1,2,\ldots,N$ for the scattering problem
(\ref{eq4})-(\ref{eq7}), where $k_1<k_2<\cdots<k_N$, $n_d\geq2$ and the incident directions
$d_{1l}$ and $d_{2l}$ satisfy the conditions given in the inverse problem (IP).
\begin{description}
\item Step 1. Choose initial guesses $(\g^{app},\la^{app})$ for $(\g,\la)$, where $\la^{app}>0$.
Set $m=0$ and go to Step 2.
\item Step 2. Set $m=m+1$. If $m>N$, then stop the iteration; otherwise, set $k=k_m$ and go to Step 3.
\item Step 3. If $Err_k<\tau\delta$, go to Step 2; otherwise, go to Step 4.
\item Step 4. Solve (\ref{eq13}) with the strategy (\ref{eq14}) to get the updates $(\triangle\g,\triangle\la)$.
Let $(\g^{app},\la^{app})$ be updated by $(\g^{app}+\triangle\g,\la^{app}+\triangle\la)$, respectively,
and go to Step 3.
\end{description}
\end{algorithm}

{%\color{red}
\begin{remark}\label{r4} {\rm
As discussed above, using superpositions of plane waves as the incident fields and multi-frequency phaseless
far-field measurements can recover both the shape and the location of the obstacle simultaneously.
However, in order to get a much better reconstruction of the obstacle we need to consider scattering data
produced at multi-frequencies (or wavenumbers) from different regions (see \cite{BLLT15}), as seen in the numerical
experiments in Section \ref{sec4}. At a low wavenumber, the scattered field is weak, so the nonlinear
equation (\ref{eq11}) becomes essentially linear. Algorithm \ref{alg1} first solves this nearly linear equation
at the lowest wavenumber to obtain low-frequency modes of the true obstacle. The approximation is then used to
linearize the nonlinear equation at the next higher wavenumber to produce a better approximation which contains
more modes of the true obstacle. This process is continued until a sufficiently high wavenumber where the dominant
modes of the obstacle are essentially recovered. The successive recovery is permitted by the Heisenberg uncertainty
principle: it is increasingly difficult to determine features of the obstacle as its size becomes decreasingly smaller
than a half of a wavelength.
}
\end{remark}
}

\section{Numerical examples}\label{sec4}

In this section, several numerical examples are carried out to demonstrate whether or not the location and
the shape of obstacles can be reconstructed simultaneously from phaseless far-field data, by using a superposition
of two plane waves as the incident field in the inverse problem (IP).
In all numerical examples, we make the following assumptions.
\begin{enumerate}[(1)]
\item For each example we use multi-frequency data with the wave numbers $k=0.5,1,3,5,7,9,11$ unless otherwise stated.

\item To generate the synthetic data, we use the integral equation method with the Nystr\"om algorithm
(see, e.g. \cite{CK13,HS98,K95}) to solve the direct problems and measure the full phaseless far-field data
with $128$ measurement points.
The noisy data $|u^\infty_{\delta}|^2$ is simulated by $|u^\infty_{\delta}|^2=|u^\infty|^2(1+\delta\zeta)$,
where $\zeta$ is a normally distributed random number in $[-1,1]$.

\item For the parameters in the algorithm, we choose $s=1.6$, as suggested in \cite{H99},
$M=25$, $\rho=0.8$ and $\tau=1.5$.

\item In each figure, we use the solid line '---' to represent the exact boundary curves of the obstacles.
Further, all the initial guesses of the boundaries of the obstacles are chosen to be circles.

\item The parametrization of the exact boundary curves we used are given in Table \ref{table2}.
\end{enumerate}
\begin{table}[h]
\centering
\begin{tabular}{ll}
\hline
Type & Parametrization\\
\hline
Circle & $r_0(\cos{t},\sin{t})+(c_1,c_2),\;t\in[0,2\pi]$\\
Apple shaped & $[({0.5+0.4\cos{t}+0.1\sin(2t)})/({1+0.7\cos{t}})](\cos{t},\sin{t}),\;t\in[0,2\pi]$\\
Kite shaped & $(\cos{t}+0.65\cos(2t)-0.65,1.5\sin{t}),\;t\in[0,2\pi]$\\
Rounded triangle & $(2+0.3\cos(3t))(\cos{t},\sin{t}),\;t\in[0,2\pi]$\\
\hline
\end{tabular}
\caption{Parametrization of the exact boundary curves}\label{table2}
\end{table}

\textbf{Example 1: Shape reconstruction of a sound-soft obstacle.}

We first consider the inverse problem of reconstructing a sound-soft, apple-shaped obstacle
from phaseless far-field data with $5\%$ noise, by using only one plane wave as the incident field
$u^i=u^i(x;d,k)$ with the direction $d=(1,0)$ and multiple frequencies.
As discussed in Remark \ref{r1}, we can only reconstruct the shape of the obstacle, and
the reconstructed obstacle can be translated into any location, depending on the initial guess of
our inversion algorithm. To verify this numerically, we choose three different initial guesses
of the exact boundary $\G$ which are circles with radius $r_0=0.5$ and three different centers
at $(-1.5,0)$, $(0.5,1)$ and $(1,-1)$, and the corresponding reconstructed curves
are denoted as "Reconstructed curve 1", "Reconstructed curve 2" and "Reconstructed curve 3",
respectively. In Figure \ref{fig1}, we present the initial curves and the reconstructed curves at $k=1,5,11.$
It is shown that the shape of the obstacle is reconstructed accurately.
But the reconstructed obstacle has three different locations depending on the choice of the initial guesses
even though multi-frequency data are used.

Next, we consider the same inverse problem as above, but with a superposition of two plane waves
as the incident field, that is, the incident field $u^i=u^i(x;d_1,d_2,k)$ with $d_1\not=d_2.$
Again, we use multi-frequency phaseless far-field data with $5\%$ noise.
We choose the same three initial guesses of the obstacle.
Figure \ref{fig2} presents the initial curves and the reconstructed curves
at $k=1,5,11$ with the two incident directions $d_1=(-1,0)$ and $d_2=(0,-1)$.
It can be seen that the shape of the obstacle is accurately reconstructed but its location is still not determined.
In this case, the reconstructed obstacle is translated only along the straight line
$\ell=a(\sqrt{2}/2,\sqrt{2}/2)$ with $a\in\R$ which is the symmetric line of the two incident directions
$d_1=(-1,0)$ and $d_2=(0,-1)$.
Figure \ref{fig3} presents the initial curves and the reconstructed curves at $k=1,5,11$
with the incident directions $\tilde{d_1}=(1,0)$, $\tilde{d_2}=(0,-1)$.
Similarly as in Figure \ref{fig2}, the shape of the obstacle is accurately reconstructed,
but its location is translated along the straight line $\ell=a(-\sqrt{2}/2,\sqrt{2}/2)$ with $a\in\R$
which is the symmetric line of the two incident directions $\tilde{d_1}=(1,0)$, $\tilde{d_2}=(0,-1)$.
This is consistent with the theoretical result in Theorem \ref{thm2}.

Finally, we consider the inverse problem (IP) for the same obstacle as above by using phaseless
far-field data with only one frequency. The wave number is chosen to be $k=3$.
Again, we use the same three initial guesses for the obstacle.
The phaseless far-field data are perturbed by $5\%$ noise and generated by
the incident field $u^i=u^i(x;d_{1l},d_{2l},k)$, $l=1,2,$ with two different sets of directions
$d_{11}=(1,0), d_{21}=(-1/2,\sqrt{3}/2)$ and $d_{12}=(1,0),d_{22}=(-1/2,-\sqrt{3}/2)$.
Figure \ref{fig10} presents the initial curves and the reconstructed curves.
From Figure \ref{fig10} it can be seen that the location of the obstacle
is still not determined. Therefore, in the following examples
we consider the inverse problem (IP) with multi-frequency measured data.

\begin{figure}[htbp]
  \centering
  \subfigure{\includegraphics[width=3in]{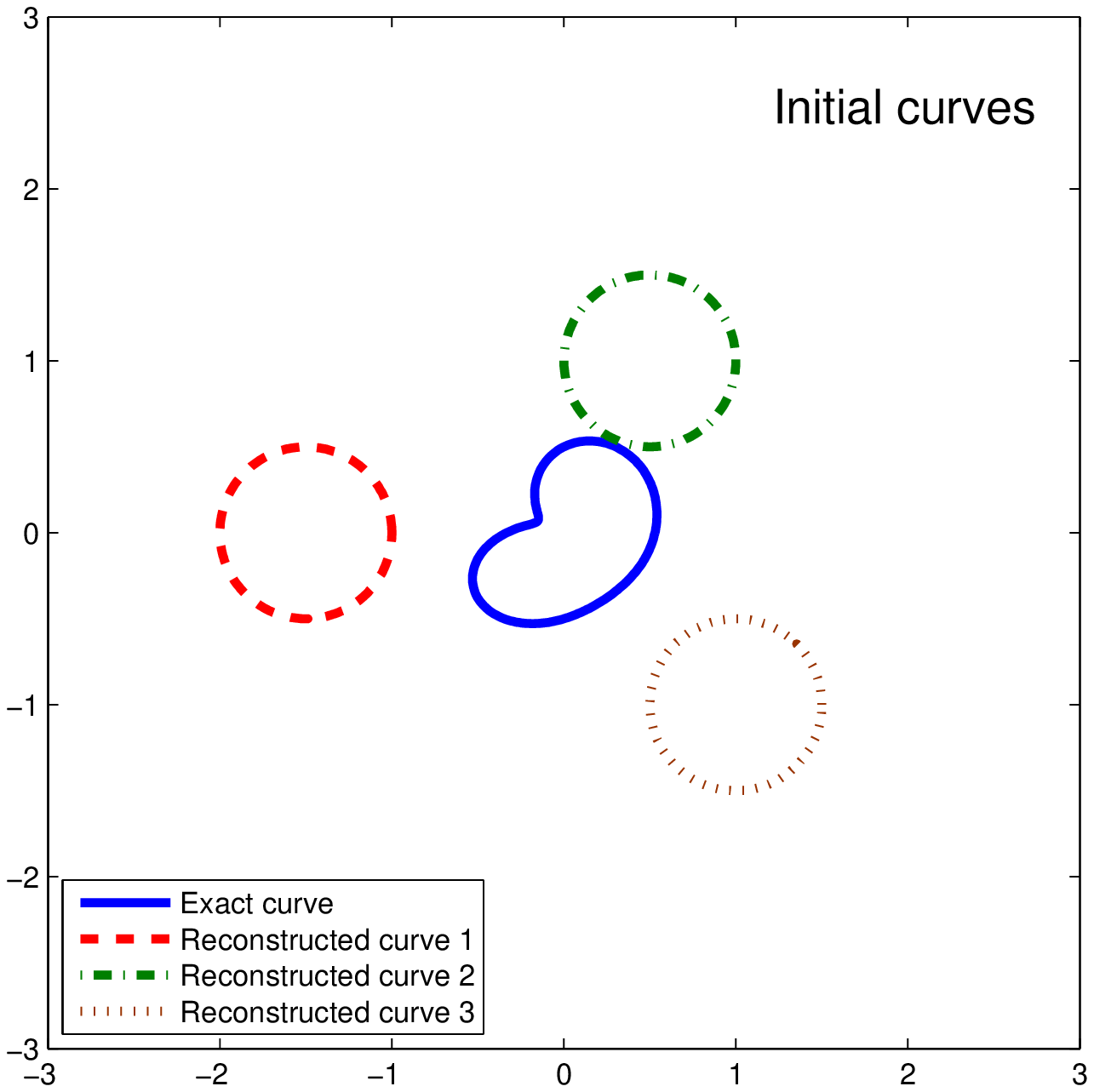}}
  \subfigure{\includegraphics[width=3in]{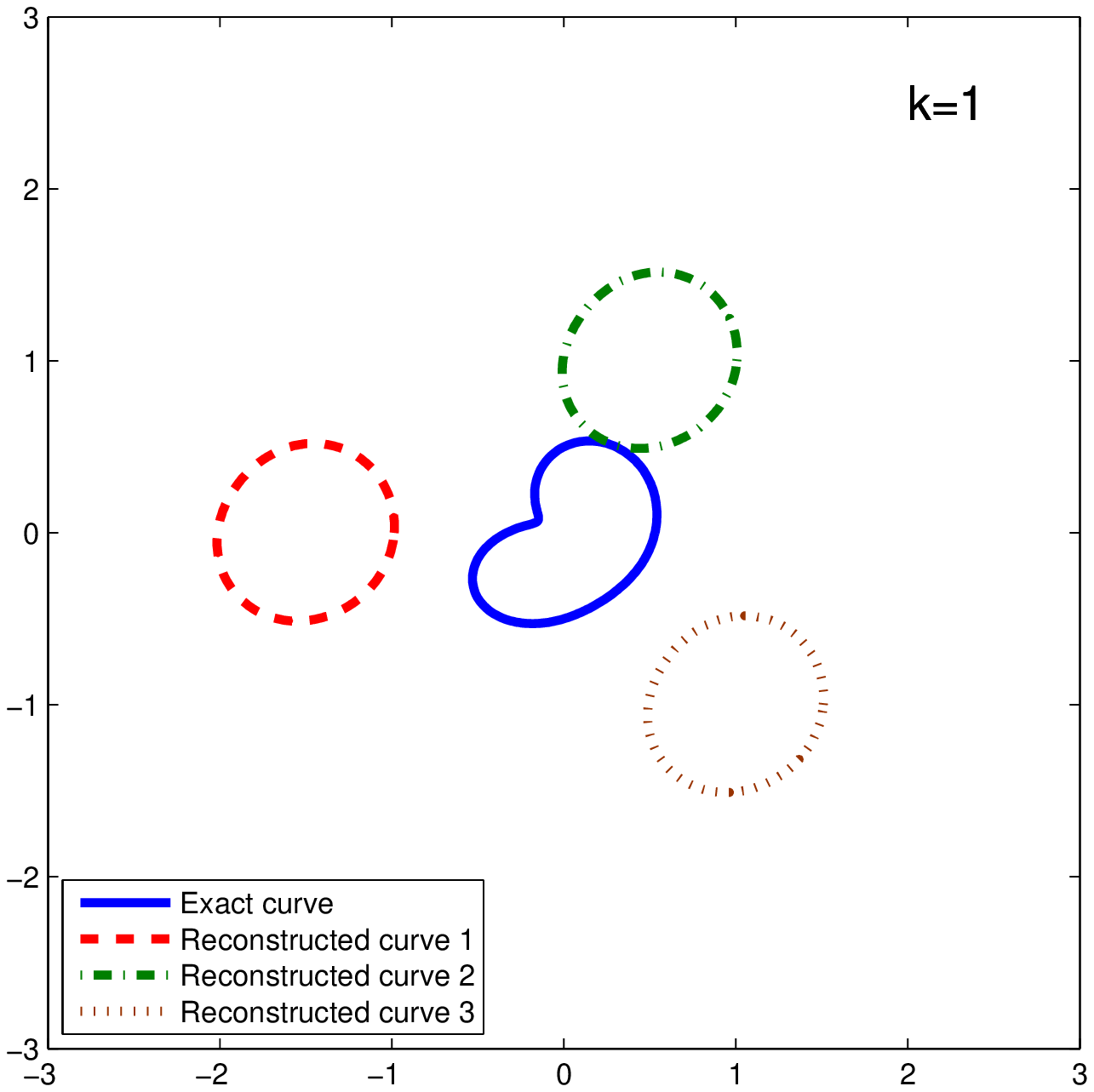}}
  \subfigure{\includegraphics[width=3in]{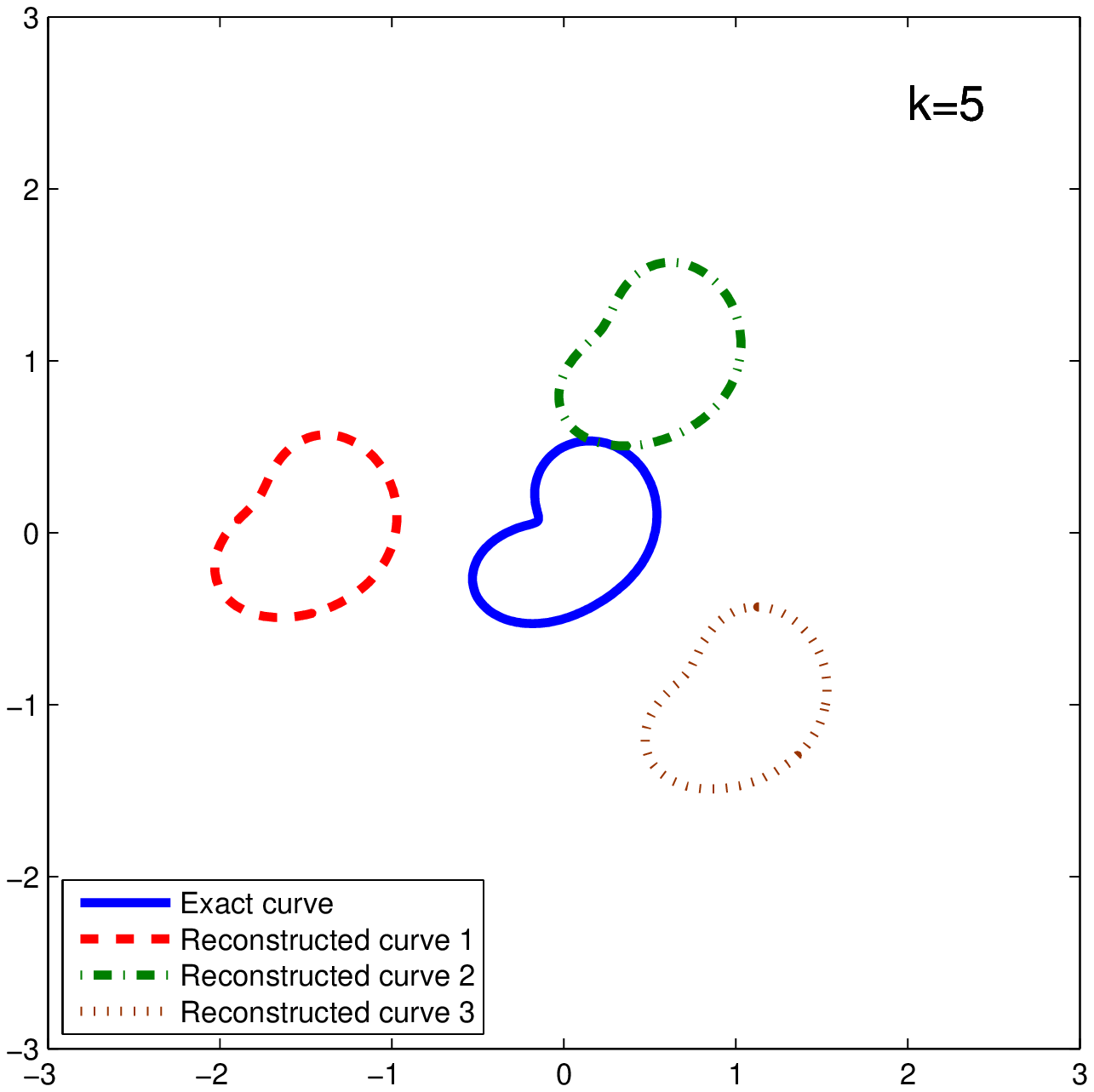}}
  \subfigure{\includegraphics[width=3in]{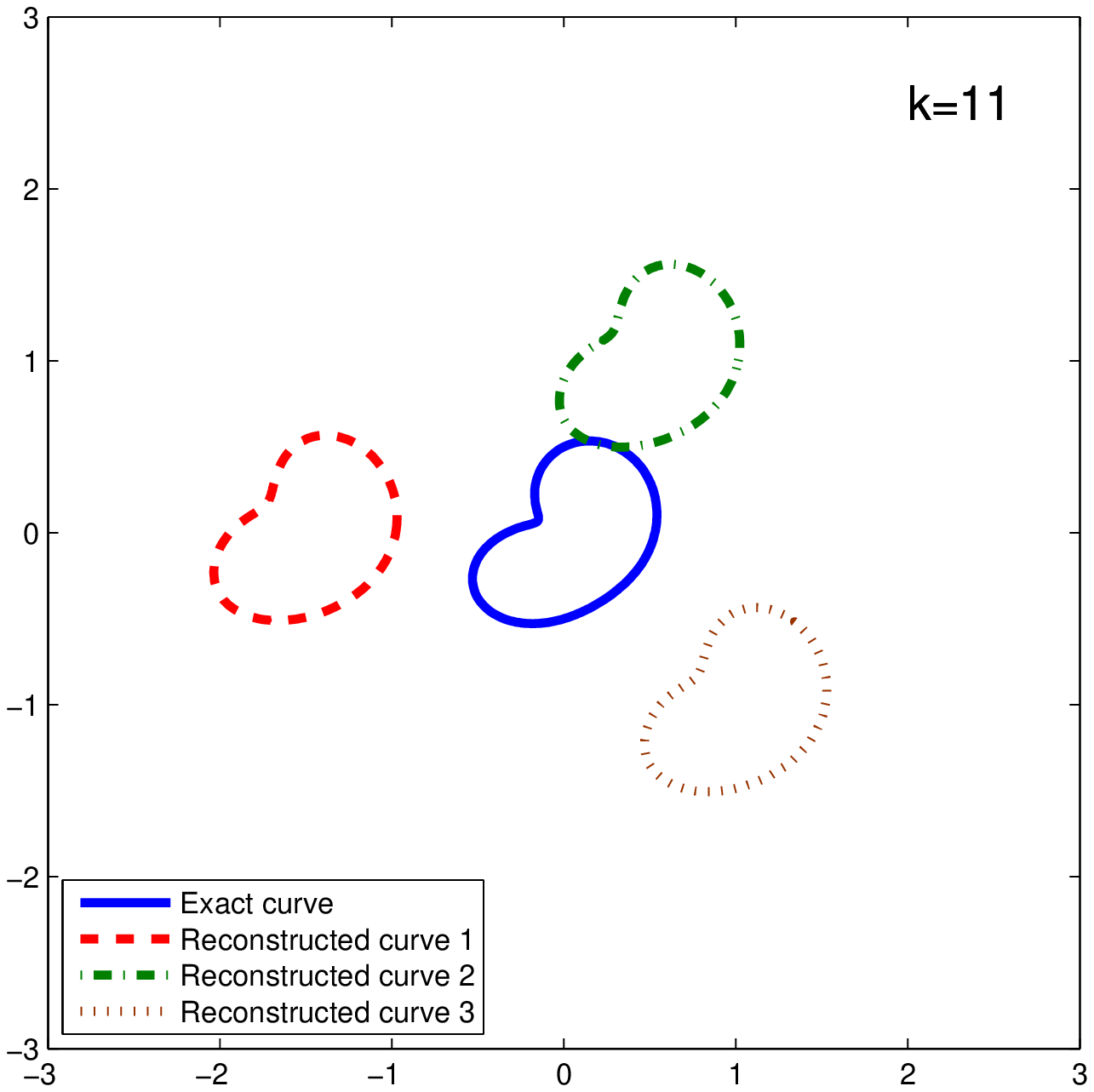}}
\caption{Shape reconstruction of a sound-soft, apple-shaped obstacle from the phaseless far-field data with $5\%$ noise,
corresponding to only one incident plane wave $u^i=u^i(x;d,k)$ with the direction $d=(1,0)$ and multiple frequencies.
The initial guesses and the reconstructed obstacles at $k=1,5,11$ are presented.
}\label{fig1}
\end{figure}

\begin{figure}[htbp]
  \centering
  \subfigure{\includegraphics[width=3in]{pic/example1/initial_curves.eps}}
  \subfigure{\includegraphics[width=3in]{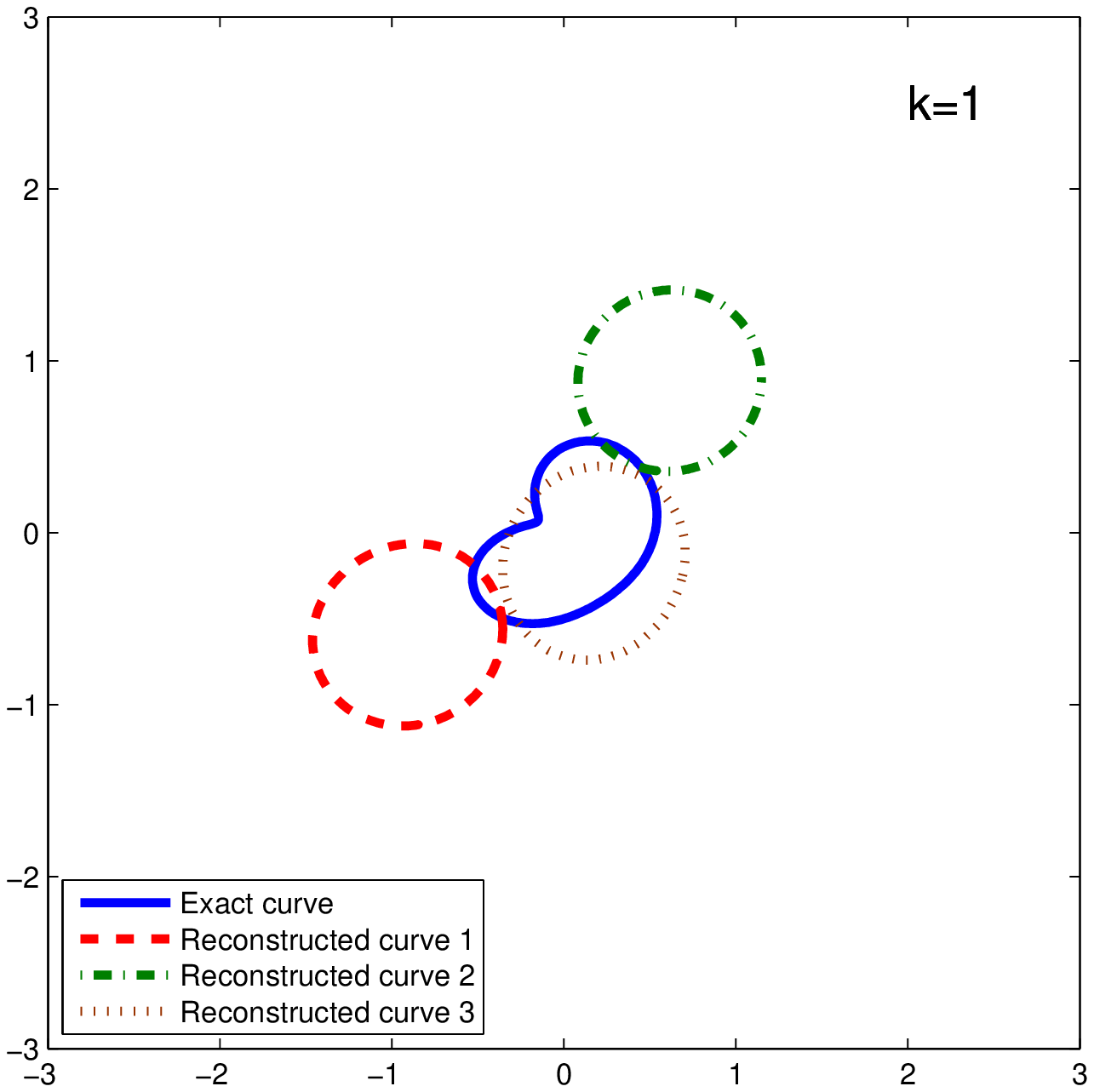}}
  \subfigure{\includegraphics[width=3in]{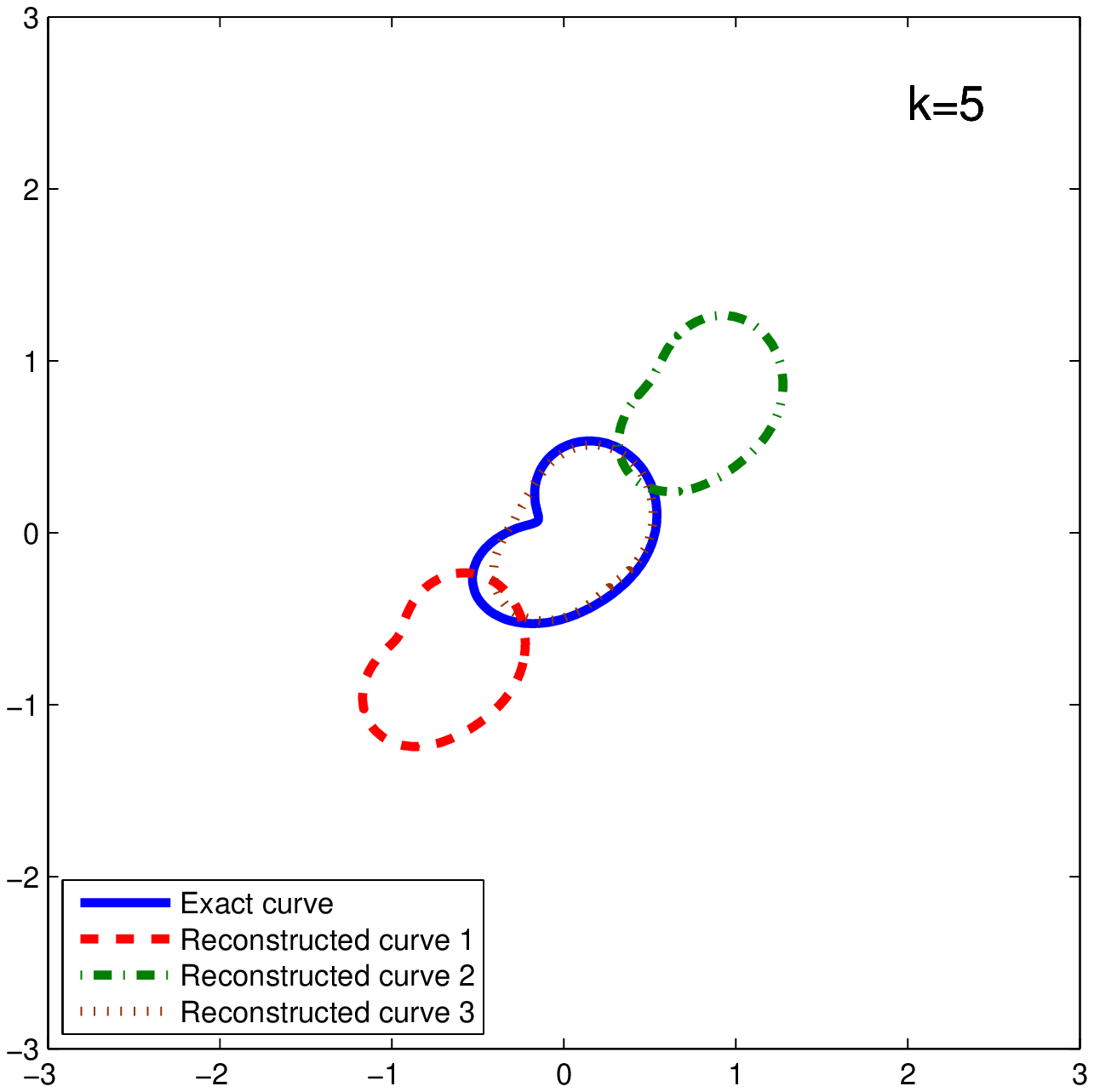}}
  \subfigure{\includegraphics[width=3in]{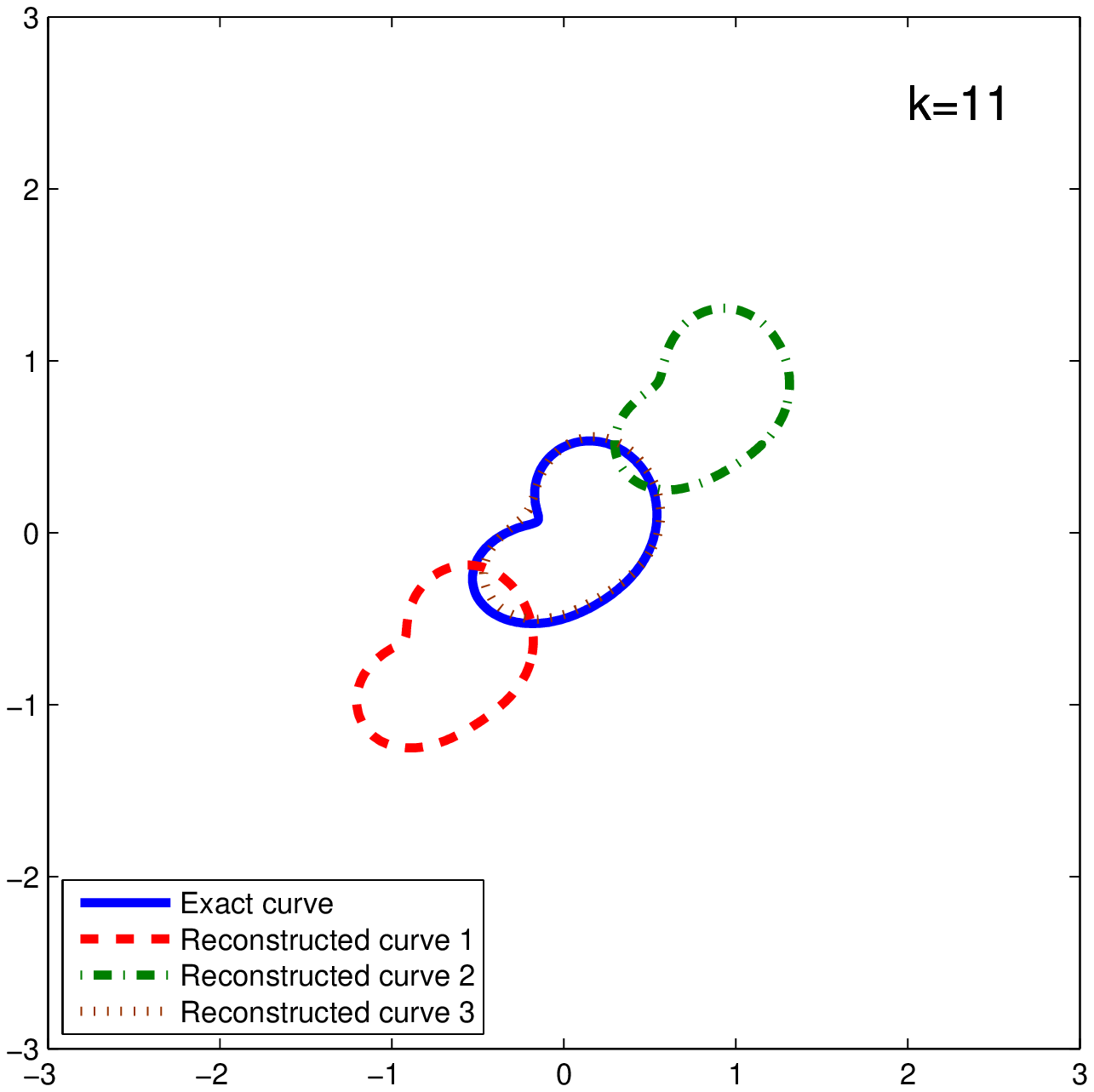}}
\caption{Shape reconstruction of a sound-soft, apple-shaped obstacle from the phaseless far-field data with $5\%$ noise,
corresponding to the superposition of two plane waves as the incident field $u^i=u^i(x;d_1,d_2,k)$
with multiple frequencies and two different directions $d_1=(-1,0)$ and $d_2=(0,-1)$.
The initial guesses and the reconstructed obstacles at $k=1,5,11$ are presented.
}\label{fig2}
\end{figure}

\begin{figure}[htbp]
  \centering
  \subfigure{\includegraphics[width=3in]{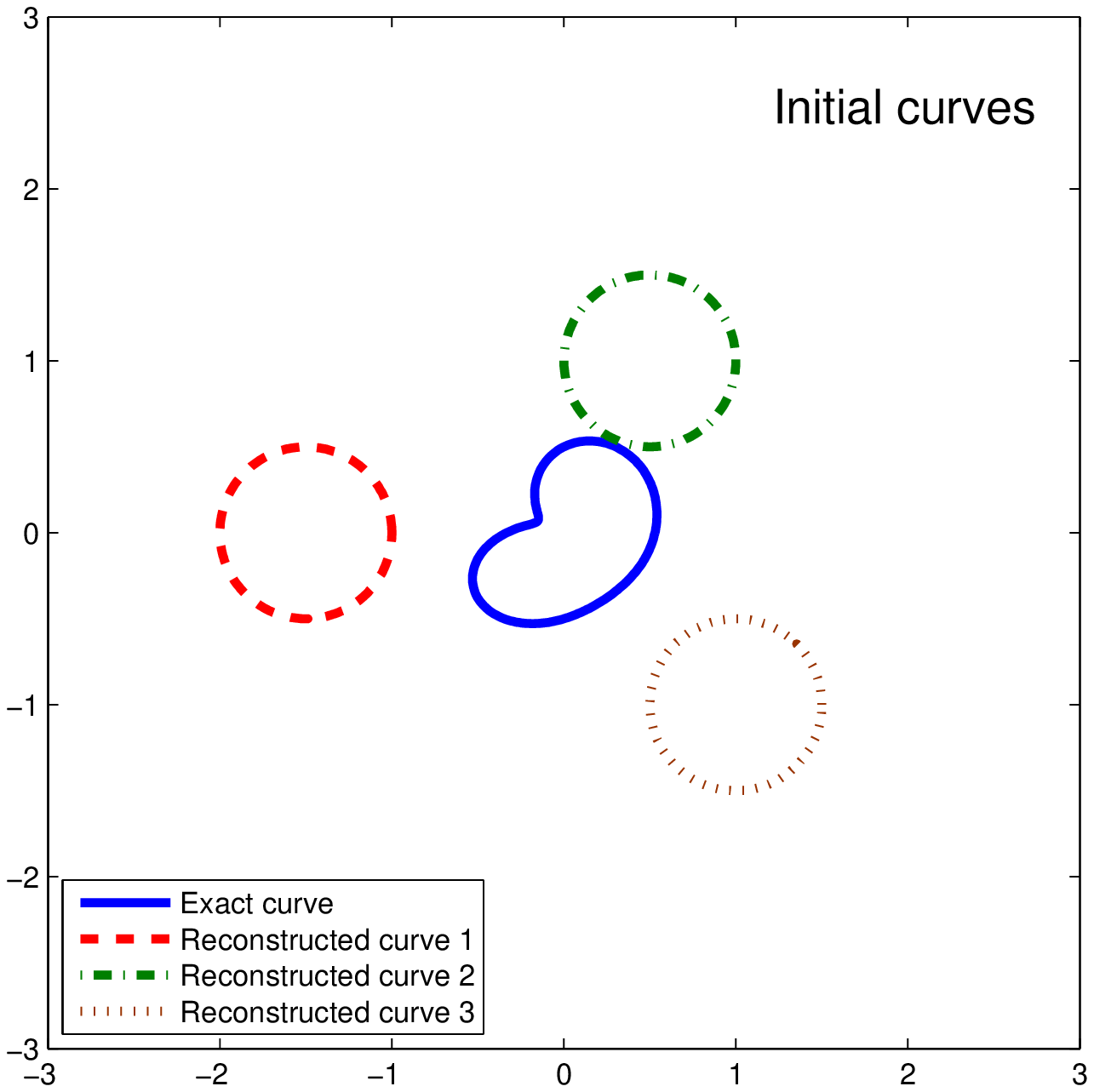}}
  \subfigure{\includegraphics[width=3in]{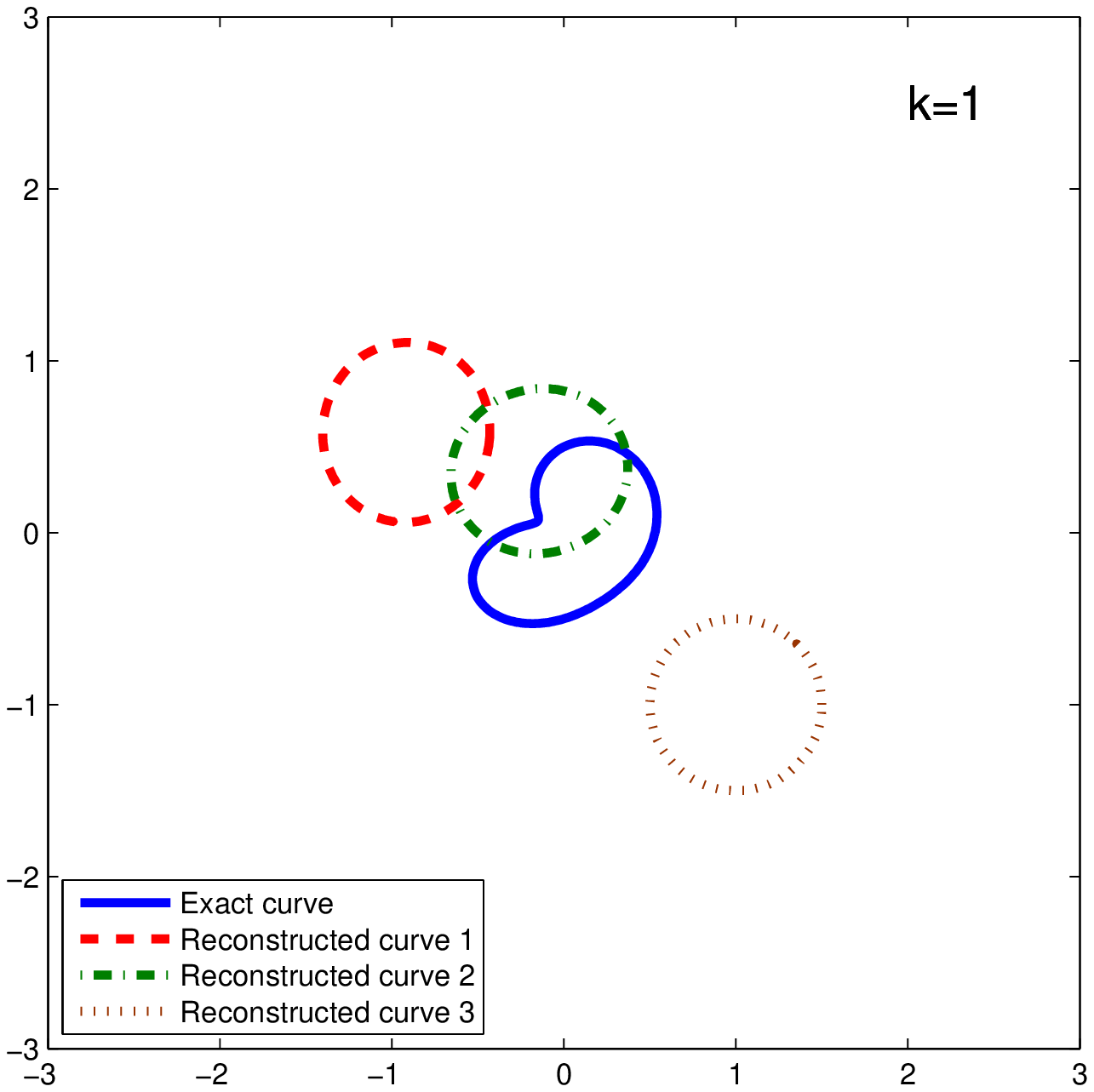}}
  \subfigure{\includegraphics[width=3in]{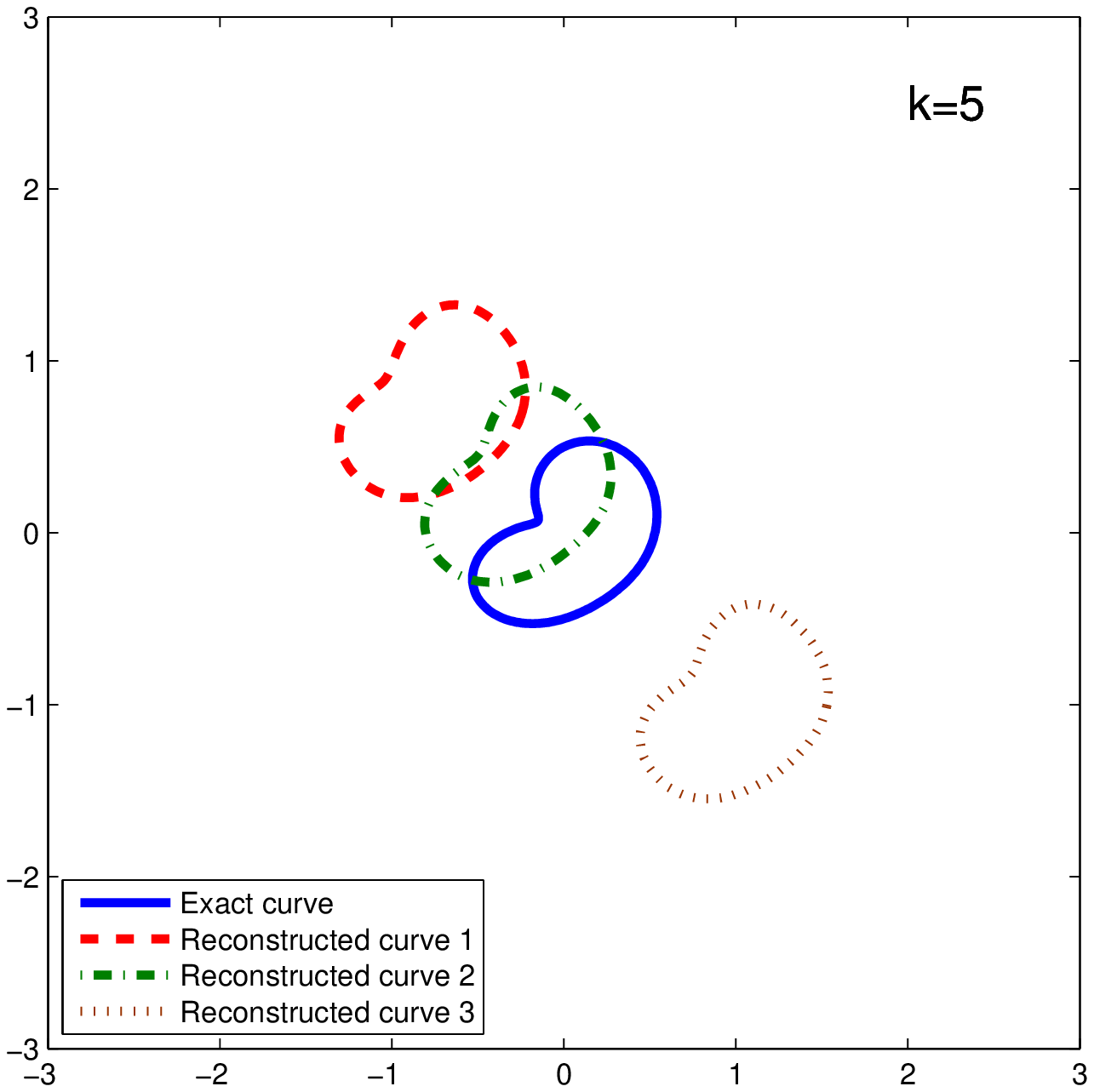}}
  \subfigure{\includegraphics[width=3in]{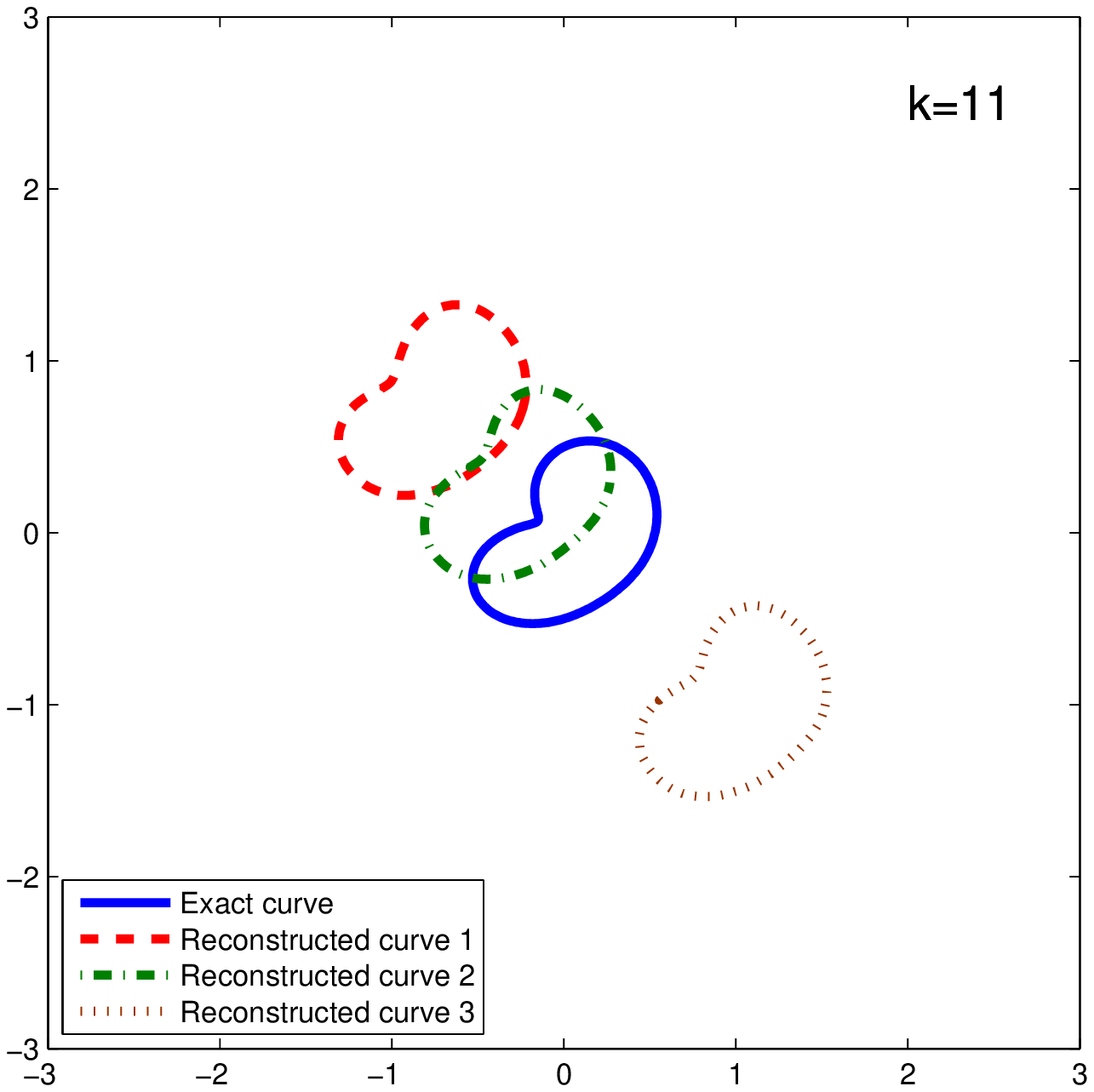}}
\caption{Shape reconstruction of a sound-soft, apple-shaped obstacle from the phaseless far-field data with $5\%$ noise,
corresponding to the superposition of two plane waves as the incident field $u^i=u^i(x;d_1,d_2,k)$
with multiple frequencies and two different directions $d_1=(1,0)$ and $d_2=(0,-1)$.
The initial guesses and the reconstructed obstacles at $k=1,5,11$ are presented.
}\label{fig3}
\end{figure}

\begin{figure}[htbp]
  \centering
  \subfigure{\includegraphics[width=3in]{pic/example1/initial_curves.eps}}
  \subfigure{\includegraphics[width=3in]{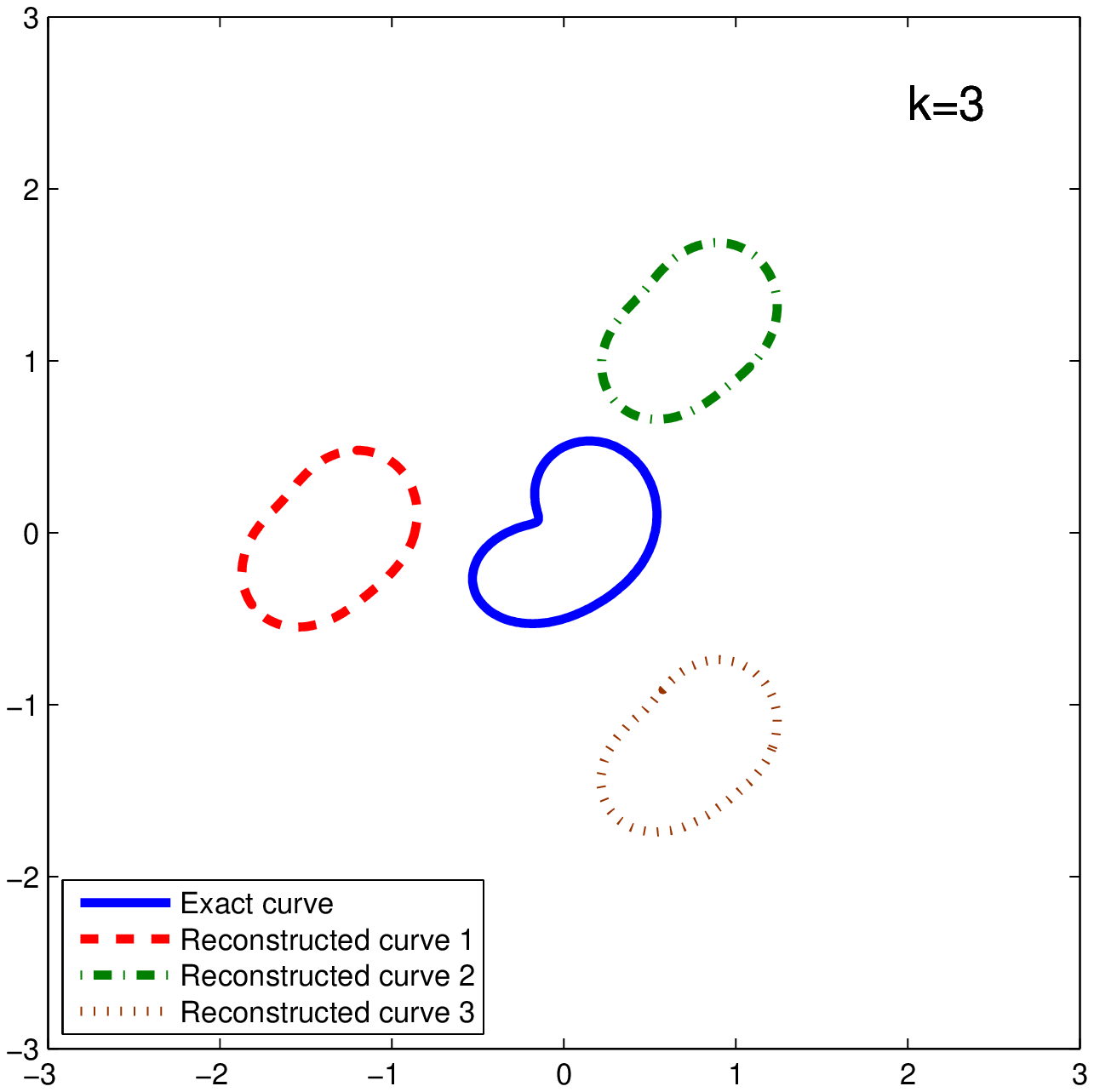}}
\caption{Shape reconstruction of a sound-soft, apple-shaped obstacle from the phaseless far-field data
with $5\%$ noise, corresponding to the incident fields $u^i=u^i(x;d_{1l},d_{2l},k)$, $l=1,2$,
with only one wave number $k=3$ and two different sets of incident directions $d_{11}=(1,0), d_{21}=(-1/2,\sqrt{3}/2)$
and $d_{12}=(1,0),d_{22}=(-1/2,-\sqrt{3}/2)$.
%The initial curves and the reconstructed obstacles at $k=3$ are presented.
}\label{fig10}
\end{figure}

\textbf{Example 2: Location and shape reconstruction of a sound-soft obstacle.}

We consider the inverse problem (IP) for the same obstacle as in Example 1.
Again, we choose the same three initial guesses of the obstacle as in Example 1.
We use $5\%$ noisy phaseless far-field data generated by the incident fields
$u^i=u^i(x;d_{1l},d_{2l},k)$, $l=1,2,$ with two different sets of directions $d_{11}=(1,0), d_{21}=(-1/2,\sqrt{3}/2)$
and $d_{12}=(1,0),d_{22}=(-1/2,-\sqrt{3}/2)$.
Figure \ref{fig4} presents the initial and reconstructed curves at $k=1,5,11$.

From Figures \ref{fig10} and \ref{fig4} it is seen that
by using multi-frequency measured data both the location and the shape of the obstacle
are accurately reconstructed with all the three different initial guesses, as expected in Remark \ref{r2}.

\begin{figure}[htbp]
  \centering
  \subfigure{\includegraphics[width=3in]{pic/example1/initial_curves.eps}}
  \subfigure{\includegraphics[width=3in]{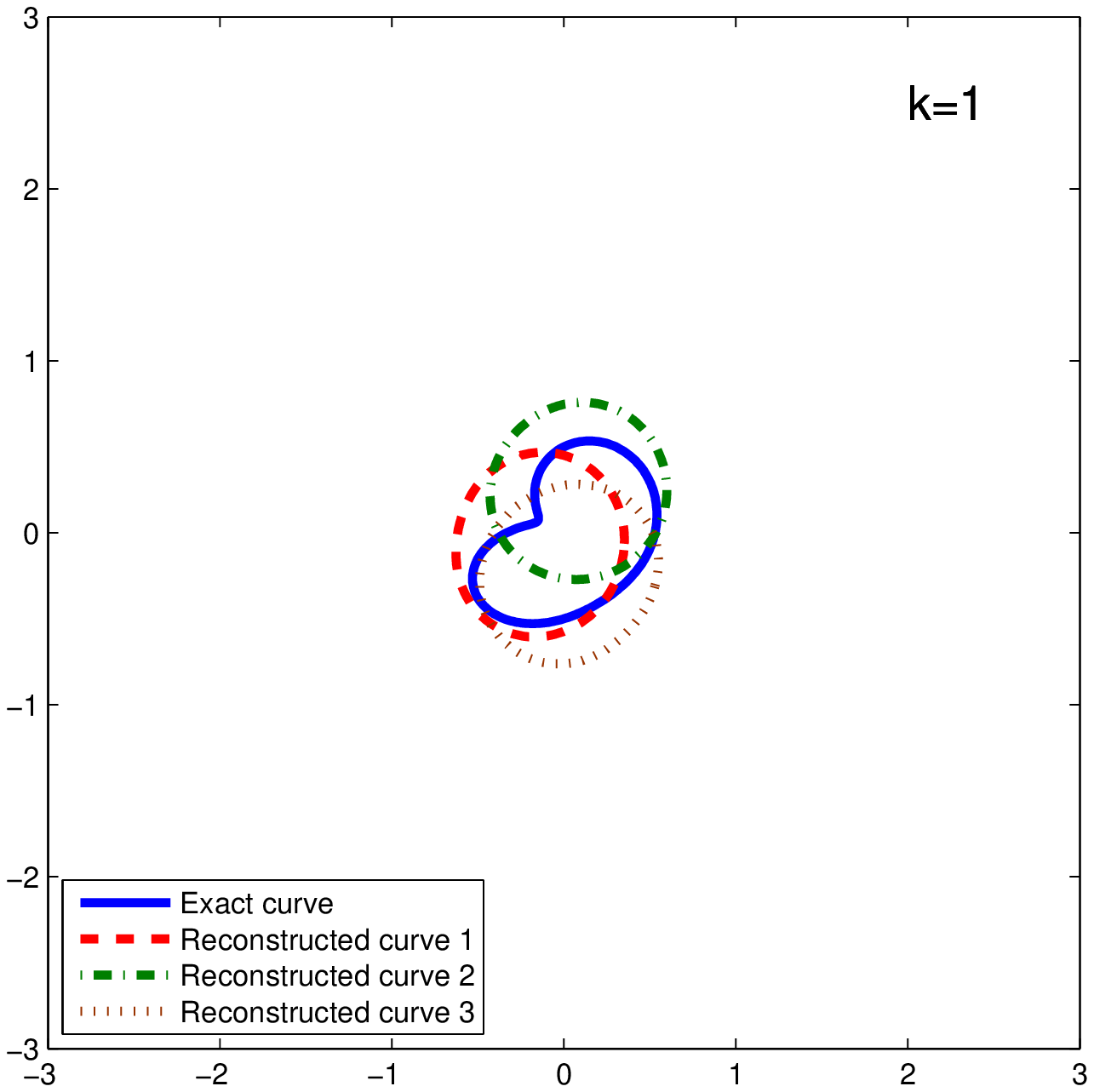}}
  \subfigure{\includegraphics[width=3in]{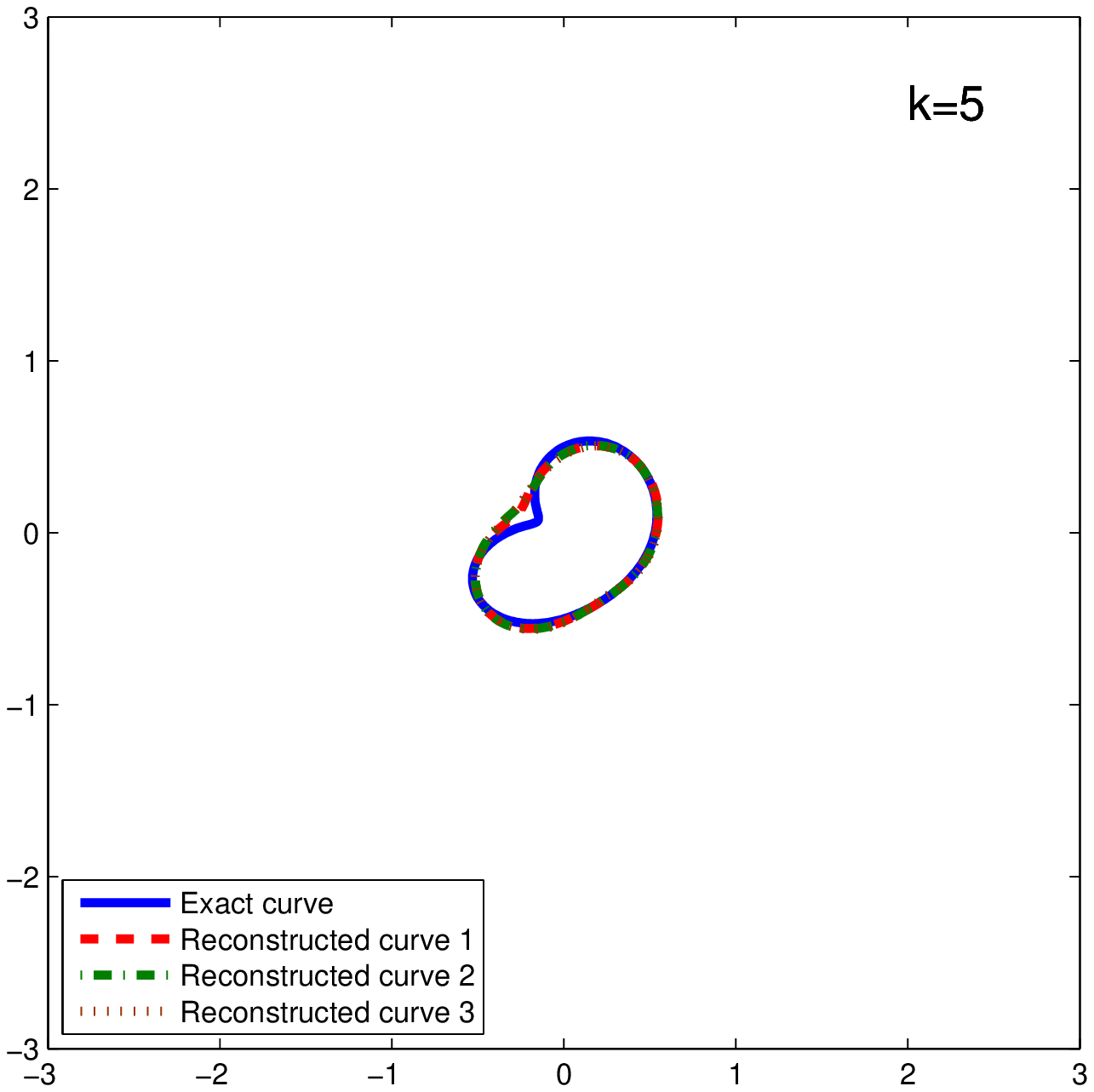}}
  \subfigure{\includegraphics[width=3in]{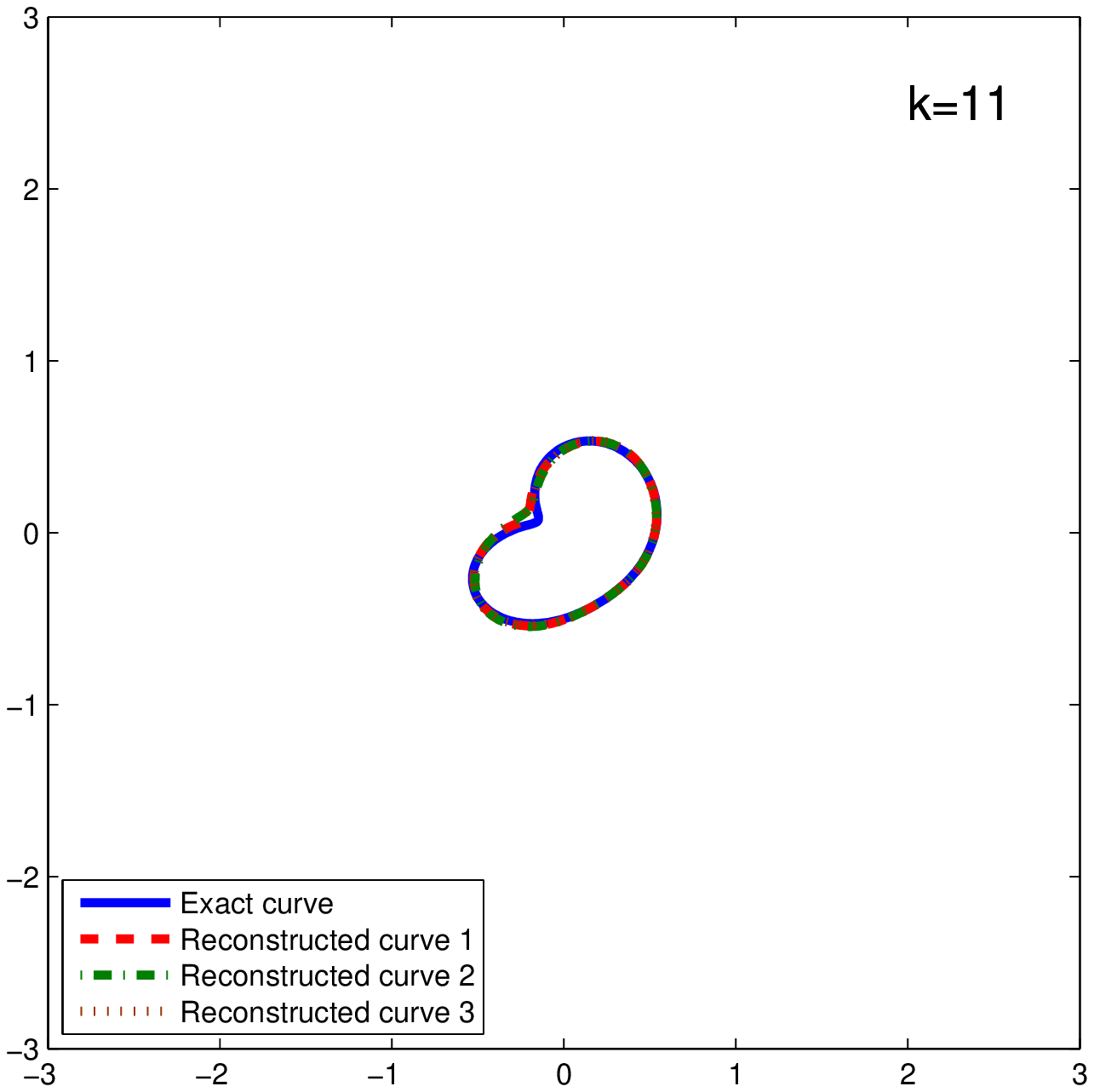}}
\caption{Location and shape reconstruction of a sound-soft, apple-shaped obstacle from the phaseless far-field data
with $5\%$ noise, corresponding to the incident fields $u^i=u^i(x;d_{1l},d_{2l},k)$, $l=1,2,$ with multiple frequencies
and two different sets of directions $d_{11}=(1,0), d_{21}=(-1/2,\sqrt{3}/2)$
and $d_{12}=(1,0),d_{22}=(-1/2,-\sqrt{3}/2)$.
The initial curves and the reconstructed obstacles at $k=1,5,11$ are presented.
}\label{fig4}
\end{figure}

\textbf{Example 3: Location and shape reconstruction of a sound-hard obstacle.}

We now consider the inverse problem (IP) with a sound-hard, kite-shaped obstacle.
The initial guess of the obstacle is taken to be a circle with radius $r_0=0.5$ and centered at $(1,1)$.
We use $5\%$ noisy phaseless far-field data generated by the incident fields
$u^i=u^i(x;d_{1l},d_{2l},k)$, $l=1,2,$ with multiple frequencies and two different sets of directions
$d_{11}=(1,0), d_{21}=(0,1)$ and $d_{12}=(0,1),d_{22}=(-1,0)$.
Figure \ref{fig6} presents the initial curve and the reconstructed curves at $k=1,5,11$.

From Figure \ref{fig6} it can be seen that both the location and the shape of the obstacle
are satisfactorily reconstructed.

\begin{figure}[htbp]
  \centering
  \subfigure{\includegraphics[width=3in]{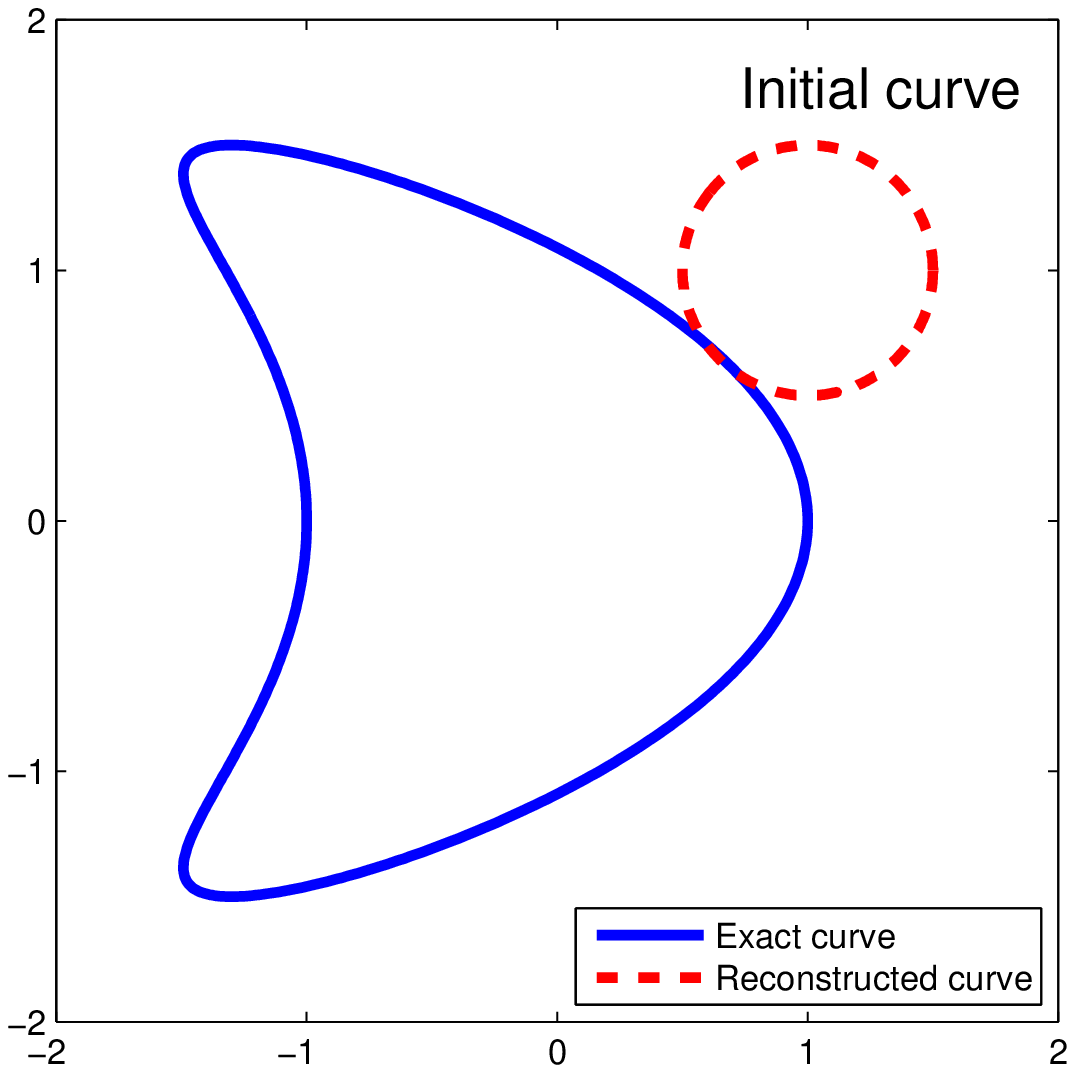}}
  \subfigure{\includegraphics[width=3in]{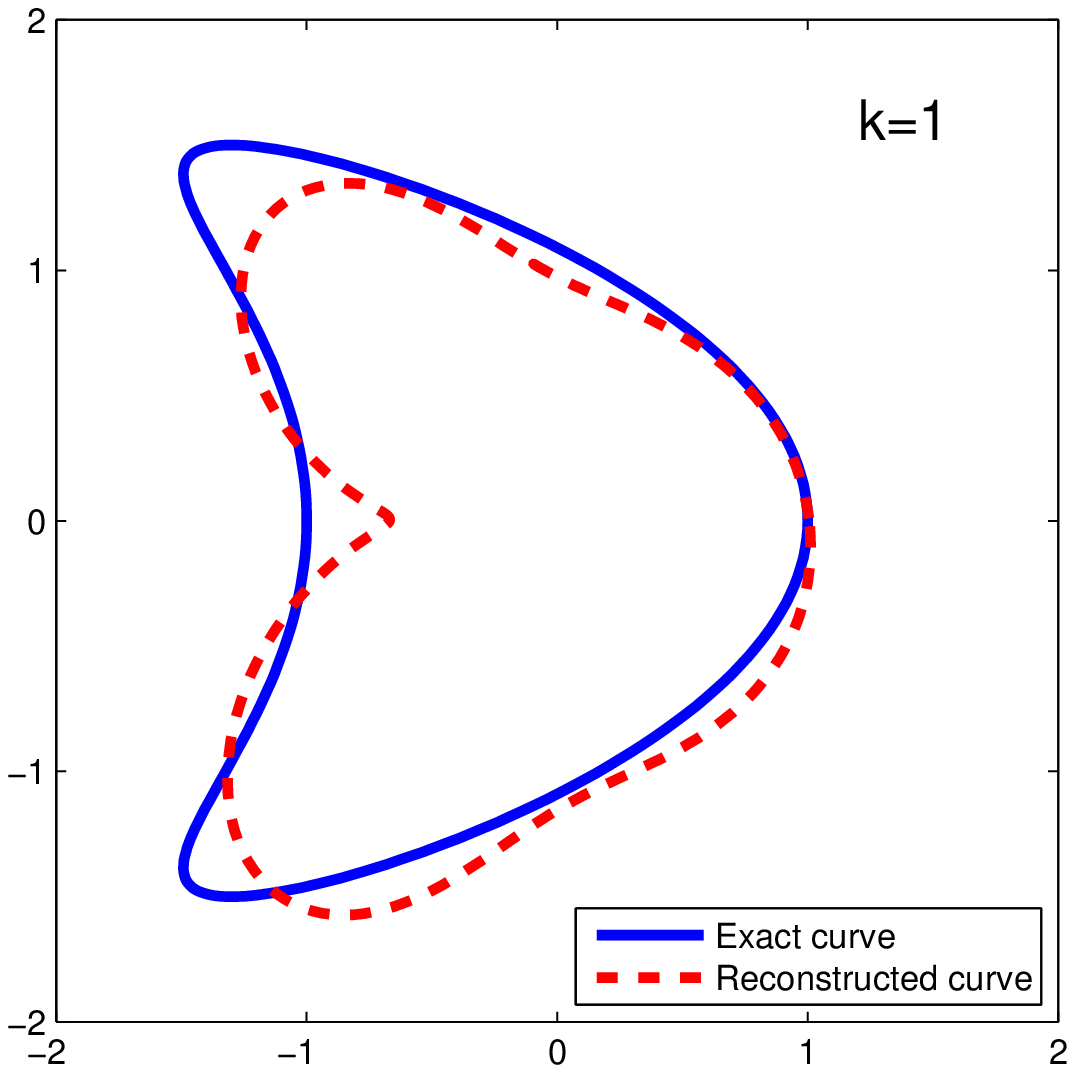}}
  \subfigure{\includegraphics[width=3in]{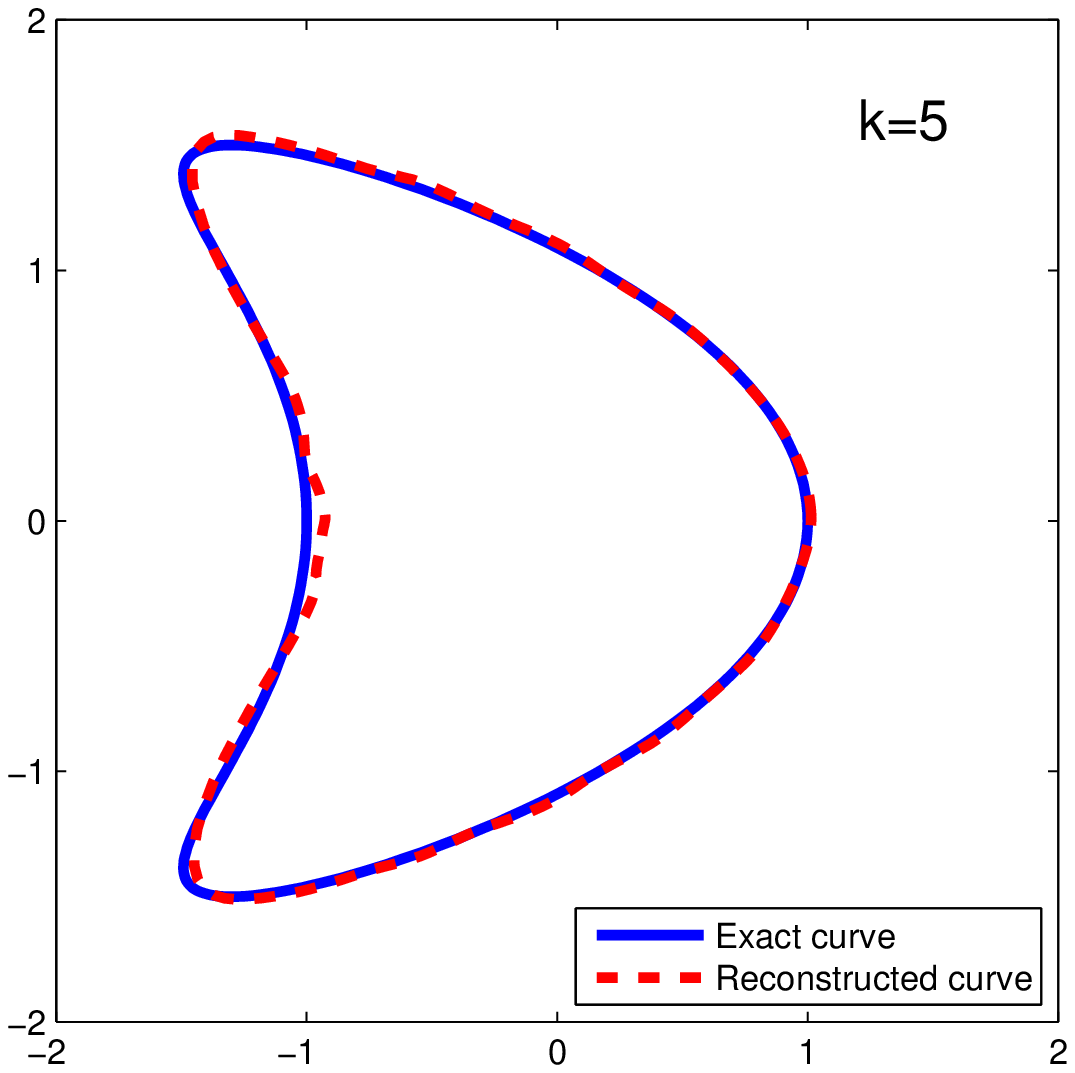}}
  \subfigure{\includegraphics[width=3in]{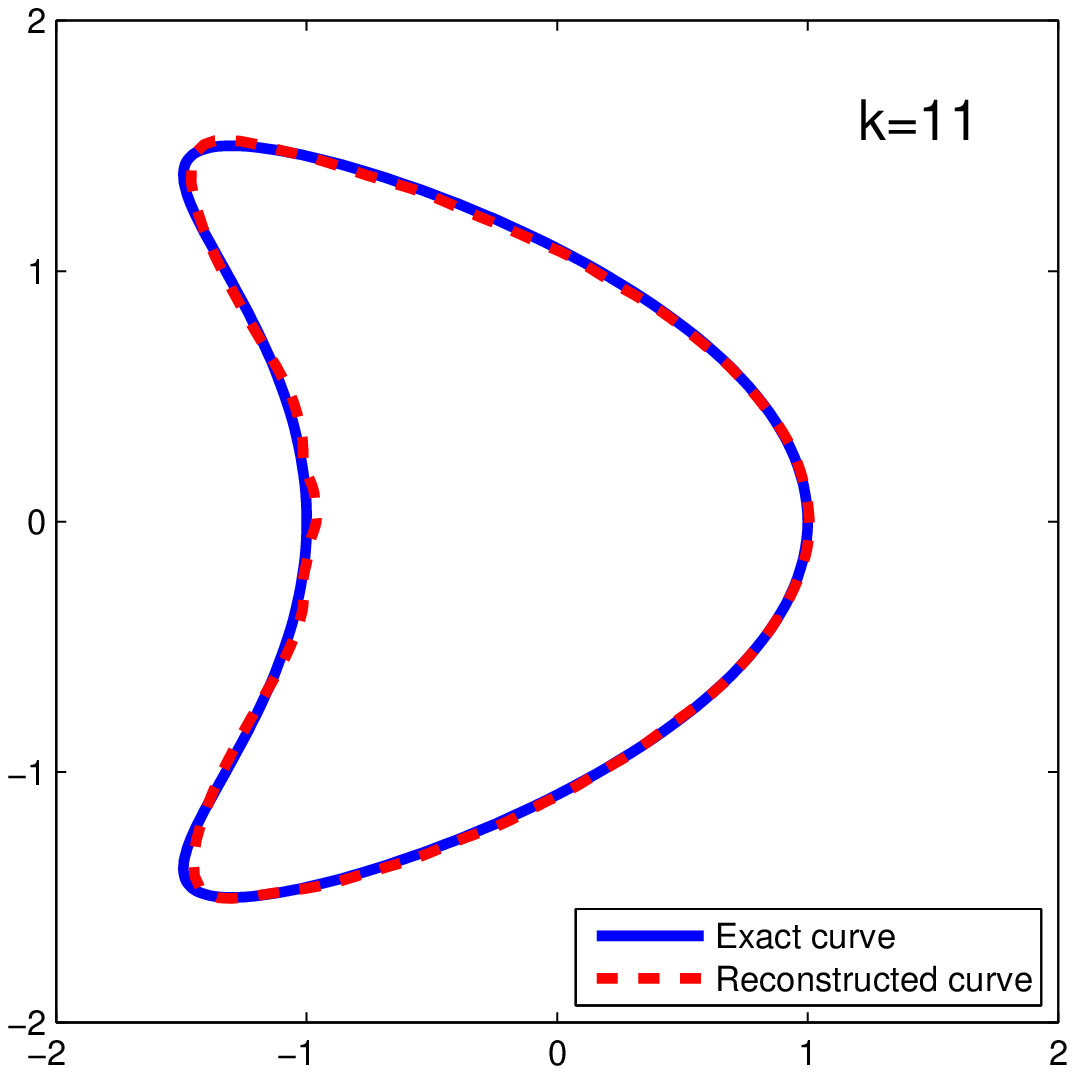}}
\caption{Location and shape reconstruction of a sound-hard, kite-shaped obstacle from the phaseless far-field data
with $5\%$ noise, corresponding to the incident fields $u^i=u^i(x;d_{1l},d_{2l},k)$, $l=1,2,$ with multiple frequencies
and two different sets of directions $d_{11}=(1,0), d_{21}=(0,1)$ and $d_{12}=(0,1),d_{22}=(-1,0)$.
The initial curve and the reconstructed obstacles at $k=1,5,11$ are presented.
}\label{fig6}
\end{figure}

\textbf{Example 4: Location and shape reconstruction of a penetrable obstacle.}

We consider the inverse problem (IP) with a penetrable, rounded triangle-shaped obstacle
with the refractive index $n=0.64$ and the transmission constant $\la=1.2$.
The initial guess of the obstacle is chosen to be a circle with radius $r_0=1.5$ and centered at $(-1,1)$.
The initial guess of the transmission constant $\la$ is taken to be $\la^{app}=1$.
We use $5\%$ noisy phaseless far-field data generated by
the incident fields $u^i=u^i(x;d_{1l},d_{2l},k)$, $l=1,2,3,4,$ with multiple frequencies and four different
sets of directions $d_{11}=(1,0), d_{21}=(0,1)$, $d_{12}=(0,1), d_{22}=(-1,0)$, $d_{13}=(-1,0), d_{23}=(0,-1)$,
$d_{14}=(0,-1),d_{24}=(1,0)$.
Figure \ref{fig8} presents the initial curve and the reconstructed curves at $k=1,5,11,$ respectively.
The reconstructed value of $\la$ is $1.263$.

From Figure \ref{fig8} it is found that both the location and the shape of the obstacle are
reconstructed very well. Further, the reconstructed values of $\la$ are also satisfactory.

\begin{figure}[htbp]
  \centering
  \subfigure{\includegraphics[width=3in]{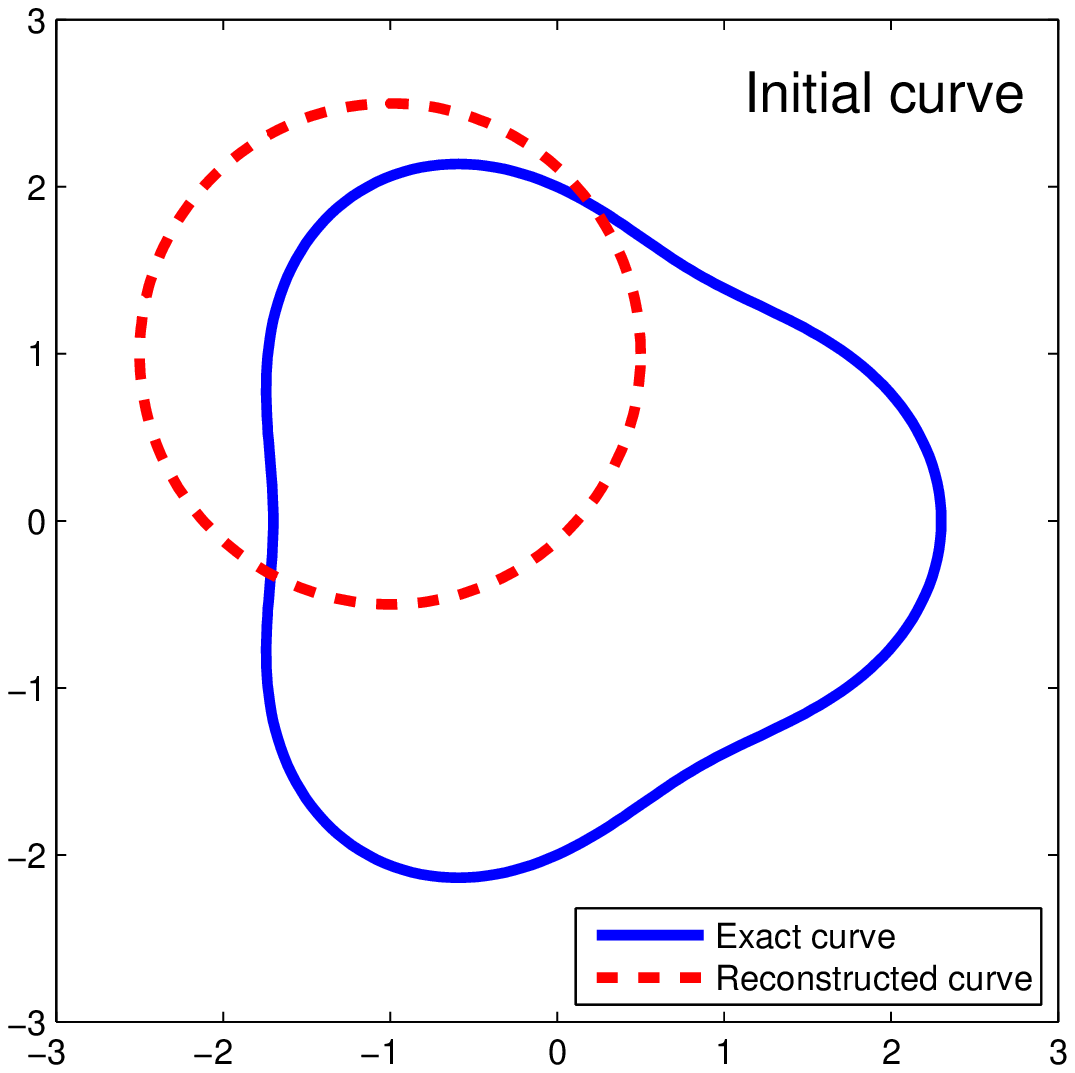}}
  \subfigure{\includegraphics[width=3in]{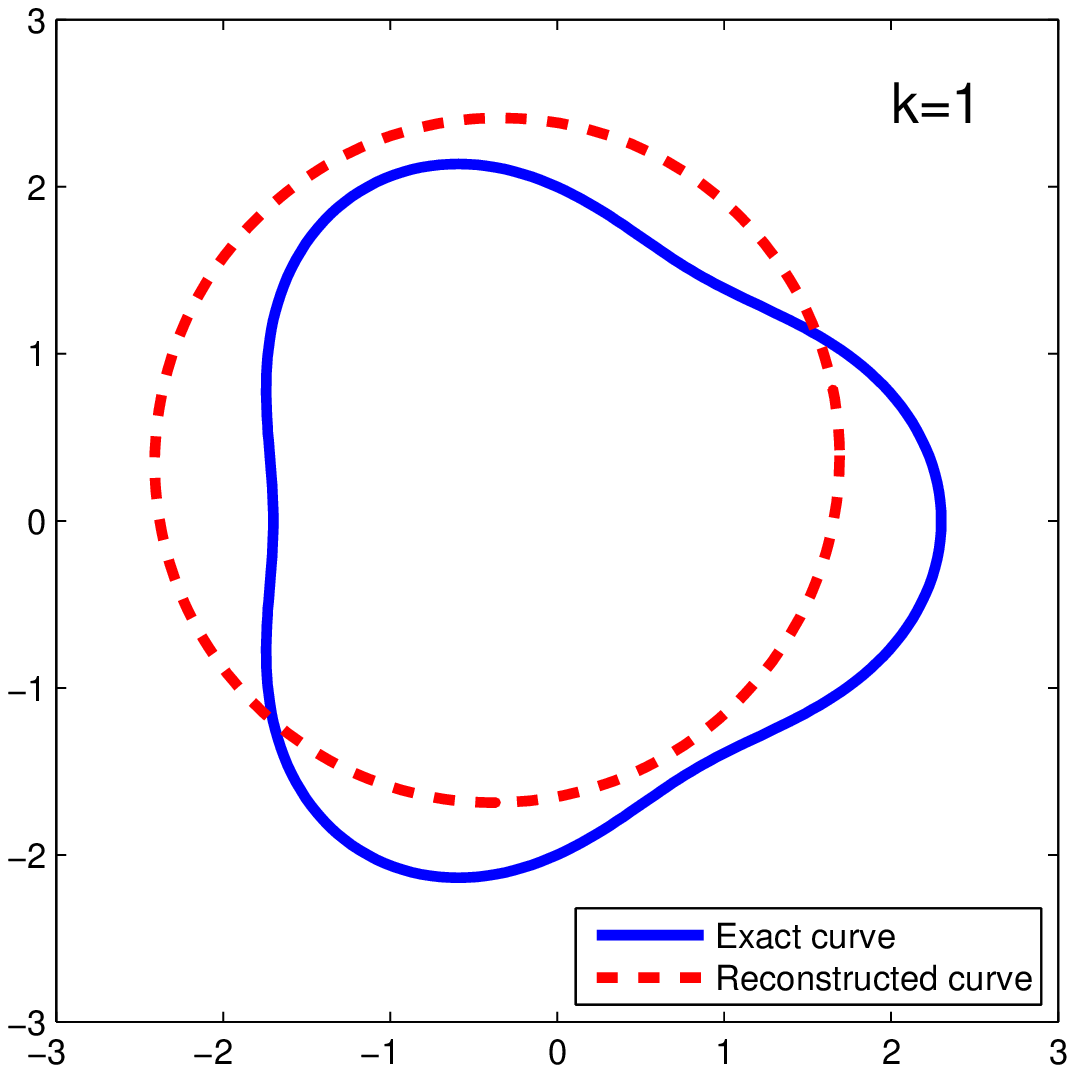}}
  \subfigure{\includegraphics[width=3in]{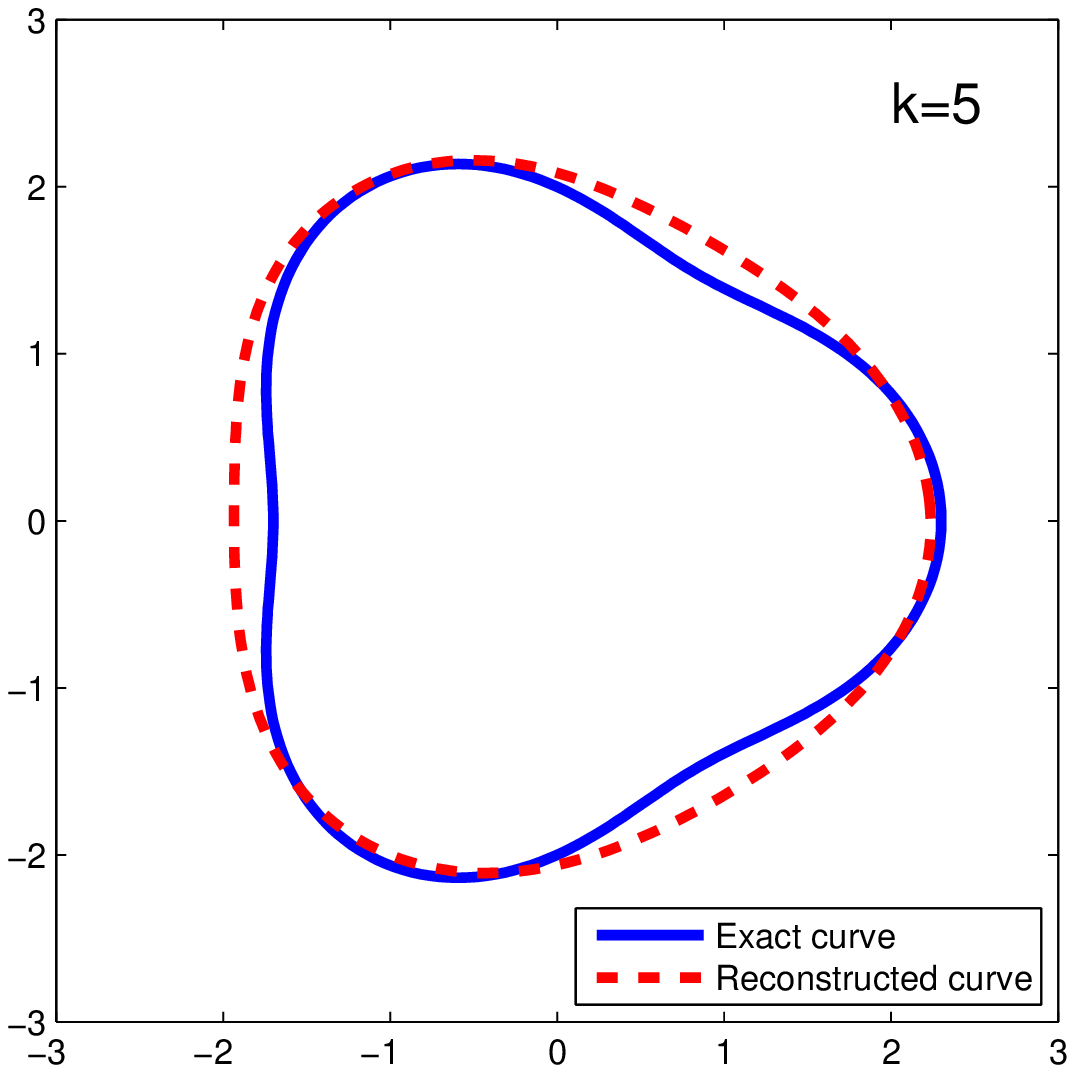}}
  \subfigure{\includegraphics[width=3in]{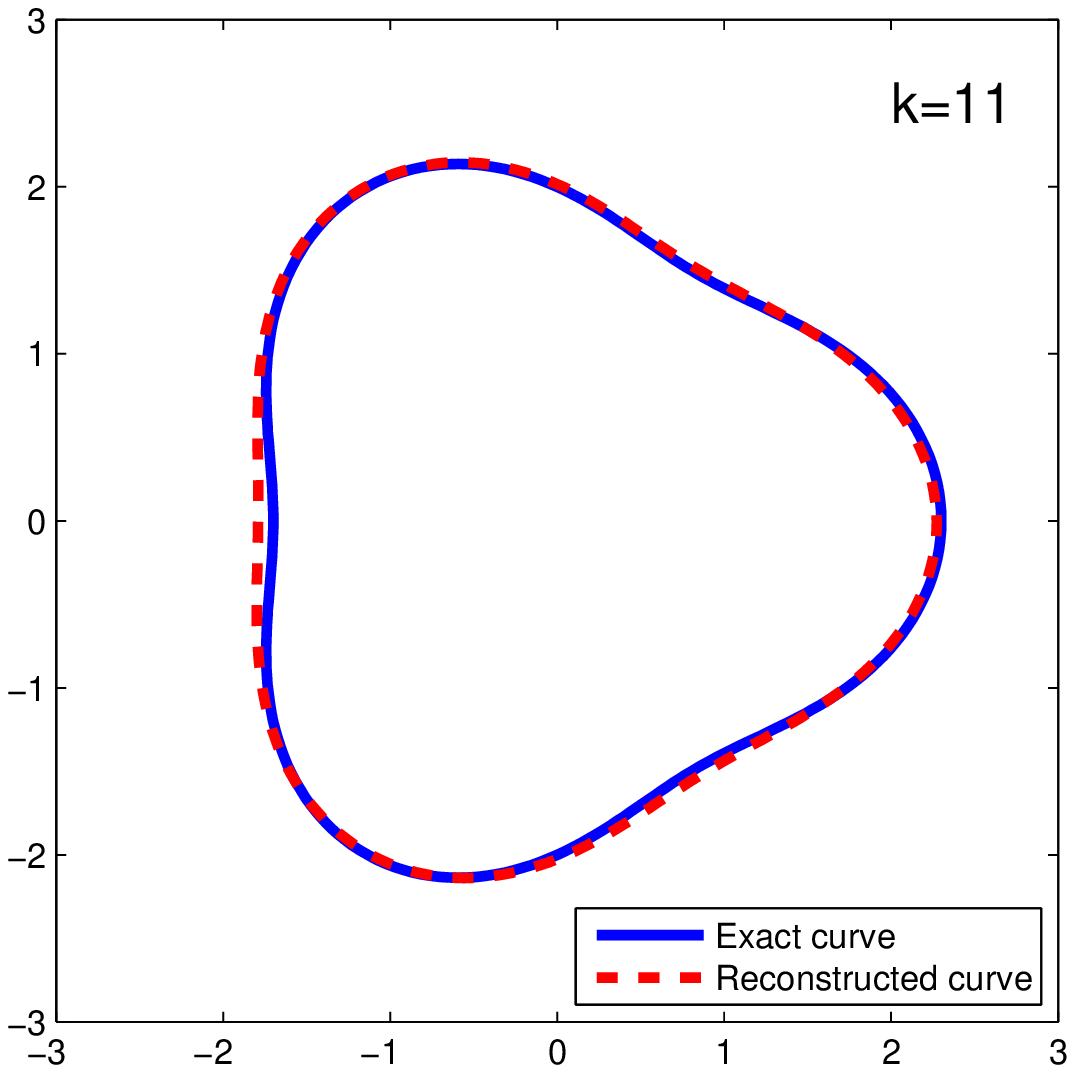}}
\caption{Location and shape reconstruction of a penetrable, rounded triangle-shaped obstacle from the phaseless
far-field data with $5\%$ noise, corresponding to the incident fields $u^i=u^i(x;d_{1l},d_{2l},k)$, $l=1,2,3,4$
with multiple frequencies and four different sets of directions $d_{11}=(1,0), d_{21}=(0,1)$,
$d_{12}=(0,1), d_{22}=(-1,0)$, $d_{13}=(-1,0), d_{23}=(0,-1)$ and $d_{14}=(0,-1),d_{24}=(1,0)$.
The initial curve and the reconstructed obstacles at $k=1,5,11$ are presented.
The transmission constant $\la$ and its reconstructed value are $1.2$ and $1.263$, respectively.
}\label{fig8}
\end{figure}

From the above examples we found that the translation invariance property of the phaseless far-field pattern
can be broken down by using a superposition of two plane waves as the incident field, in conjunction with
multiple frequencies. It is further found that the inverse problem (IP), based on this approach,
can be solved by the proposed recursive Newton-type iteration algorithm in frequencies for reconstructing
both the location and the shape of the obstacles from the multi-frequency phaseless far-field data.

\section{Conclusions and future works}

In this paper we devised a new approach to break down the translation invariance property of the modulus of
the far-field pattern (or phaseless far-field pattern) in inverse obstacle scattering.
Precisely, we proposed to use superpositions of two plane waves rather than one plane wave as the incident fields
and proved that the translation invariance property of the phaseless far-field pattern is broken by using
such incident fields with all wave numbers in a finite interval.
We developed a recursive Newton-type iteration algorithm in frequencies to recover both the location and the shape
of the obstacle simultaneously from multi-frequency phaseless far-field data.
Numerical examples have also been conducted to demonstrate that our approach is valid and the inverse algorithm
is effective. The approach has been extended to inverse scattering by locally rough surfaces in \cite{ZZ17}.
It is expected that the approach can be extended to the three-dimensional case and to other inverse
scattering problems such as inverse electromagnetic scattering.
{%\color{blue}
It should be also mentioned that in practice the use of phaseless measurements is to stabilize the imaging with
respect to medium noise. Medium noise corrupts much more the phase of the scattered wave than its amplitude.
Our proposed method cross-correlates two signals obtained from incident different directions. This should be quite
stable with respect to medium noise. A similar idea has be implemented in a completely different context in \cite{AGJN13}.
}

On the other hand, radar cross section (RCS) is an important measure for radar systems to detect a target.
Intuitively, RCS measures how much energy reflected back from the infinity compared to the incident wave, and
mathematically, it measures the intensity of the far-field pattern of the scattered field (the square of the
modulus of the far-field pattern). Thus, in this paper, we actually developed an iterative algorithm to recover
the scattering obstacle by multi-frequency RCS.
In view of the results presented in this paper, we have also developed a fast imaging algorithm to recover
scattering obstacles by intensity-only far-field data (or RCS) at a fixed frequency \cite{ZZ16}.

\section*{Acknowledgements}

This work is partly supported by the NNSF of China grants 91430102, 91630309, 61379093 and 11501558
and the National Center for Mathematics and Interdisciplinary Sciences, CAS.
We thank Professor Rainer Kress at University of G\"ottingen, Germany for pointing out a
counterexample to the previous version of Theorem \ref{thm2}, leading to the present improved
version of Theorem \ref{thm2}. We also thank Dr. Xiaodong Liu and Dr. Jiaqing Yang for helpful discussions.

\end{document}